\documentclass[a4paper,11pt]{amsart}

\usepackage{amssymb,amsbsy,amsmath,amsfonts,amssymb,amscd}
\usepackage{latexsym}
\usepackage{mathtools}
\usepackage{graphics}
\usepackage{color}
\usepackage{comment}
\usepackage{soul}
\input xy
\xyoption{all}

\theoremstyle{plain}
\newtheorem{thm}{Theorem}[section]
\newtheorem{theorem}[thm]{Theorem}

\newtheorem{lemma}[thm]{Lemma}
\newtheorem{corollary}[thm]{Corollary}
\newtheorem{proposition}[thm]{Proposition}
\theoremstyle{definition}
\newtheorem{remark}[thm]{Remark}

\newtheorem{defin}[thm]{Definition}

\newtheorem{example}[thm]{Example}

\numberwithin{equation}{section}

\newcommand{\sB}{{\mathcal B}}
\newcommand{\sC}{{\mathcal C}}

\newcommand{\sF}{{\mathcal F}}

\newcommand{\sH}{{\mathcal H}}

\newcommand{\sT}{{\mathcal T}}

\newcommand{\sW}{{\mathcal W}}


\newcommand{\BP}{{\mathbb P}}

\newcommand{\PP}{\ensuremath{\mathbb{P}}}

\newcommand{\CC}{\ensuremath{\mathbb{C}}}
\newcommand{\RR}{\ensuremath{\mathbb{R}}}
\newcommand{\ZZ}{\ensuremath{\mathbb{Z}}}
\newcommand{\QQ}{\ensuremath{\mathbb{Q}}}

\newcommand{\NN}{\ensuremath{\mathbb{N}}}


\newcommand\la{\lambda}

\newcommand\al{\alpha}
\newcommand\be{\beta}
\newcommand\Ga{\Gamma}

\newcommand\ga{\gamma}

\newcommand\e{\epsilon}

\newcommand{\ra}{\ensuremath{\rightarrow}}

\def\eea{\end{eqnarray*}}
\def\bea{\begin{eqnarray*}}

\newcommand\dual{\mathrel{\raise3pt\hbox{$\underline{\mathrm{\thinspace d
\thinspace}}$}}}
\newcommand\qe{\ifhmode\unskip\nobreak\fi\quad $\Box$}       

\def\BOX{\hfill\lower.5\baselineskip\hbox{$\Box$}}

\newtheorem{theo}{Theorem}[section]

\newtheorem{remarkk}[theo]{Remark}

\setlength{\parindent}{0pt} 

%
\usepackage{hyperref}
%
%




\title[Endomorphisms of the space of elliptic curves]
{Geometric Endomorphisms of the Hesse moduli space of elliptic curves}\author{Fabrizio Catanese, Edoardo Sernesi}
\date{August  24 2023}
\address {Mathematisches Institut der Universit\"at Bayreuth\\
NW II,  Universit\"atsstr. 30\\
95447 Bayreuth}
\email{fabrizio.catanese@uni-bayreuth.de}
\address{  Korea Institute for Advanced Study, Hoegiro 87, Seoul, 
133--722.}
\address{  Universit\'a di Roma Tre, 
L.go S.L. Murialdo 1, 00146 Roma (Italy).}
\email{sernesi@gmail.com}
\thanks{AMS Classification: 14H52, 14H50, 14H10, 37F10.\\
 The authors   have no conflicts of interest to declare that are relevant to the content of this article.}

\begin{document}

\maketitle

\begin{abstract}
We consider the  geometric map $ \mathfrak C$, called Cayleyan, associating to a plane cubic
$E$  the adjoint of its dual curve.   We  show that  $ \mathfrak C$ and  the classical Hessian
map $ \mathfrak H$ generate a free semigroup.

We begin the investigation of the geometry and dynamics of
these maps, and of the {\bf geometrically special elliptic curves}: these are the 
 elliptic curves isomorphic to cubics in the Hesse pencil which are fixed by some endomorphism belonging to the
semigroup  $\sW(\frak H,  \frak C)$ generated by $ \frak H,  \frak C$. 

 We point out then how the dynamic behaviours of $ \mathfrak H$ and $  \mathfrak C$
differ drastically.   Firstly, concerning the number of real periodic points:
for $ \mathfrak H$ these are infinitely many, for $  \mathfrak C$ they are just $4$.
Secondly, the Julia set of  $ \mathfrak H$ is the whole projective line, unlike what 
happens for all
elements of $\sW(\frak H,  \frak C)$ which are not iterates of $ \mathfrak H$.
\end{abstract}

\tableofcontents

\section{Introduction: the Hessian and the Cayley  construction
and the main results}\label{S:introd}

Denote by  $\BP^2$ and by $\BP^{2\vee}$ the projective plane and its dual, and consider a nonsingular plane cubic 
$E\subset \BP^2$,  with equation $F(x)=0$. 

 We shall indeed identify $\BP^2$ and $\BP^{2\vee}$ in terms of the standard scalar product.

We can then associate to $E$ another cubic  in  the following two ways. 

The first way is classical  and well known: we let $ \mathfrak H (E)$ be the Hessian cubic, that is, the  zero set of the determinant $ det (\frac{ \partial^2 F}{\partial x_i \partial x_j})$
of the Hessian matrix of $E$.

In the second way we let 
  $C\subset \BP^{2\vee}$ be the sextic dual to $E$. Then we let $ \mathfrak C (E) \subset \BP^{2\vee}$ be the cubic adjoint to $C$, namely the unique cubic containing the 9   cusps of $C$. 
  
  We can now repeat the construction starting from $D : = \mathfrak C (E)$. 
  We   consider the sextic $D^\vee\subset \BP^2$ dual to $D$ and its adjoint cubic $E_1 : = \mathfrak C  \circ \mathfrak C (E) \subset \BP^2$. 

   These constructions can be iterated, and our general theme is the study of the iteration of the mapping $\mathfrak C $  and of other holomorphic maps which are compositions of $\mathfrak C $ and $ \mathfrak H$.
  
In particular we associate to $E$ a sequence of  cubics 
(which are not necessarily nonsingular, as the case of the Fermat cubic shows):
  $$
  E_1= \mathfrak C  \circ \mathfrak C (E), E_2 := \mathfrak C  \circ \mathfrak C (E_1), E_3, \dots \subset \BP^2.
  $$

 These were the motivating questions which started our investigations:

\medskip

\textbf{Question (1):}\label{Q1} Find explicit  conditions to  have   $E=E_n$  for a given $n \ge 1$ (the case $n=1$,  due to \cite{hollcroft}, shall be explained in the sequel).

\textbf{Question (2):} What is the geometry of the iteration of the holomorphic maps $\mathfrak C, \mathfrak H$  and of the  holomorphic maps in the semigroup $  \sW (\frak H, \frak C)$  generated by $\mathfrak C, \mathfrak H$?

\medskip

 The  best way to answer these questions consists in  reducing the number of parameters for the cubic curve $E$, using the fact that every smooth plane cubic is projectively equivalent to 
 a cubic in the Hesse pencil of cubic curves, parametrized by $\la \in \PP^1$, and which is a moduli space 
 for elliptic curves with level $3$ structure.
 
  Then we can view $\mathfrak C, \mathfrak H$ as rational self-maps of degree $3$
 of $\PP^1$, and try to study their dynamics.
 
 \smallskip
 
 These are our main  results:
 
  \smallskip
  
 {\bf  Theorem \ref{free-semigroup}:} {\em $ \mathfrak H $ and $ \mathfrak C$
 generate a free rank 2 semigroup   $ \sW (\frak H, \frak C)$ of self-maps of $\PP^1(\CC)$.
 }
 
  \smallskip
  
 {\bf  Theorem \ref{D-periodic}:}
{\em 
The real periodic points of $\mathfrak C$ are just the four fixpoints of $\mathfrak C$:
$$
  \infty, -\frac{1}{2}, -\frac{1}{2}\pm \frac{\sqrt{3}}{2}
  $$

 }

 \smallskip
  
 {\bf  Theorem \ref{H-periodic}:}
{\em 
 $\mathfrak H$ has infinitely many real periodic points.
 Indeed, $\mathfrak H^{\circ 2m}$  has, for $ m \geq 2$,  at least $ 2(3^m -1)$ real fixpoints.
 
Likewise, the lines $\e \RR, \e^2 \RR \subset \CC \subset \PP^1(\CC)$, where $\e$ is a primitive third root of  $1$, contain each infinitely many  periodic points.

 }
 \smallskip
 
  {\bf  Theorem \ref{julia}:}
{\em
The Julia set $J(\frak H)$ 
equals the whole projective line $\PP^1(\CC)$.

Also, for each endomorphism  $ f \in \sW (\frak H, \frak C)$, 
the Julia set $J(f)$ equals the whole projective line $\PP^1(\CC)$
if and only if $f$ is  an iterate of $\frak H$.
}

\section{The Hesse pencil, the Hesse group and equations for the Hessian and Cayleyan maps}

 \smallskip
 
 The Hesse cubic $E (\la)$ is the cubic 

 $$
E (\la):\ \la_0 \sum_i x_i^3 + 6 \lambda_1  x_1x_2x_3 =0, \quad \la  : =
\frac{\la_1}{\la_0} \in \PP^1(\CC) = \CC \cup \{ \infty\}.
$$

  The Hesse pencil was introduced by Hesse in 1844 (formula 54 of page 90 of \cite{hesse}) and still plays a role for many 
 recent developments, see for instance  \cite{b-l-13},  \cite{Art-Dolg},  \cite{ca-se}.
 
  The advantage of using the Hesse pencil is twofold: every nonsingular plane cubic is projectively equivalent to a cubic in the Hesse pencil; moreover, and more important, is that for the  cubics of the Hesse pencil   we can explicitly write
 the endomorphisms $\mathfrak H$ and $\mathfrak C$.
 
  We shall briefly recall some classical results, which can be found for instance
 in \cite{e-c} Libro 3, pages 213-240  (here  is explained in detail the Cayley construction, for each plane curve, see also the historical note on pages 185-188; the
 Cayleyan  is also briefly touched in \cite{coolidge}, 
 page 150 and can be found in \cite{cayley-omnia}), and in \cite{b-k} pages 283-305 (of the English edition),
 and  \cite{Art-Dolg}.

 The only caveat is: each author has a different parameter $\la' : = 6 \la = :  - 3 \mu$,
 hence one has to be careful in using the various formulae.

For each nonsingular cubic $E$ the cubic and the Hessian cubic 
$\mathfrak H(E)$ intersect in the nine inflection points of $E$. 

If $E$ is in the Hesse pencil, these nine points  form  the base point set $\sB  $ of the 
pencil, 
$$  \sB  : = \{ x \in \PP^2 | \exists j \in \{1,2,3\} s.t. \ x_j = \sum_i x_i^3=0\}.$$
Hence either a curve $E (\la)$ in the Hesse pencil has exactly 9 flexpoints
( the base points in  $ \sB = E (\la) \cap \mathfrak H (E (\la) )$), 
or $E (\la) = \mathfrak H (E (\la) ) $, and then all points are flexpoints, which means that
$E (\la) $ is a union of 3 lines.

  It is well known  and easy to compute that $\mathfrak H (E (\la) ) = E ( \mathfrak H (\la)) $, where,  writing $\lambda =\frac{\lambda_1}{\lambda_0}$ if $\lambda_0\ne 0$,
$$ \mathfrak H_0 = - 6 \la^2, \mathfrak H_1 = 1 + 2 \la^3 \Rightarrow 
 \mathfrak H (\la)  = - \frac{1 + 2 \la^3}{  6 \la^2}.$$
 
All curves in the Hesse pencil are smooth, with exception of 
 the curves with 
 $$ \la = \mathfrak H (\la)  \Leftrightarrow \la = \infty, \ {\rm or } \ (2\la)^3 = -1.$$
 
 These four cubics are the 4 triangles in the Hesse pencil.

  Direct inspection of the equations of the cubics in the  Hesse pencil
shows invariance of each equation by the symmetric group $\mathfrak S_3$, and by the group  $U_3$ of
 unimodular diagonal matrices with entries cubic roots of $1$.
 
 These two subgroups generate the so called extended Heisenberg group
 $\widehat{\sH}$, of order $54$ (see for instance \cite{b-l-13}).
 While $U_3$ and the cyclical permutation  of coordinates generate the Heisenberg group
 $\sH$, of order  $27$.
 
  The extended Heisenberg group $\widehat\sH$ acts on each individual cubic through
   $\widehat G$,  the group of order $18$ which is  the quotient of  $\widehat\sH$ by its centre
 (consisting of multiples of the identity matrix by a cubic root of $1$),  acting   on each individual 
 cubic of the pencil as the 
group generated by translations by 3-torsion elements, and by multiplication by $(-1)$.

Instead the quotient of the Heisenberg group $\sH$ by its centre 
is isomorphic to $(\ZZ/3)^2$, the group of translations
by 3-torsion elements.

One sees also an order 3 transformation of $\PP^2$
$$ \ga :  (x_1,x_2, x_3) \mapsto (x_1,\e x_2, \e^2 x_3), $$
where $\e$ is a primitive third root of unity;
$\ga$ has the effect of sending $E(\la)$ to $E(\e \la)$, and together the two transformations
$$ \ga (\la) = \e \la, \ \phi (\la) = \frac{1-\la}{2 \la+1}$$
generate a subgroup of the projectivities, which we shall denote
$\mathfrak A_4$,  and call the {\bf  Alternating Hesse group}, since it permutes the four triangles in the Hesse pencil
as the alternating group in 4 letters ( $\phi$ is the double transposition permuting
$\infty$ with $-\frac{1}{2}$, and $- \frac{1}{2} \e$ with $- \frac{1}{2} \e^2$).

Also $\phi$ is induced by a projectivity which leaves the Hesse pencil invariant
(see \cite{b-k}, page 297),  given by the Lagrange resolvents:
$$ \Phi : (x_1, x_2, x_3) \mapsto (x_1+ x_2+ x_3, x_1+ \e x_2+ \e^2 x_3,
x_1+ \e^2 x_2+ \e x_3).$$

Indeed $\mathfrak A_4$ is the quotient of the  larger group, called the 
extended Hesse group $\widehat{Hes}$, of order $648$, generated 
by  $\widehat{\sH}, \Phi, \ga$. 

$\widehat{Hes}$ divided by the centre yields the Hesse group $Hes$ of projectivities
of $\PP^2$, and we have:
$$ Hes \cong SA(2, \ZZ/3), \ \ \mathfrak A_4 = \widehat{Hes} / \widehat{\sH} \cong SL(2, \ZZ/3),$$
where $SA$ denotes the special affine group.

\bigskip

 We shall now calculate and see that $\mathfrak C (E (\la) ) = E ( \mathfrak C (\la)) $, where

 $$
\frak C(\lambda) = \frac{1-4\lambda^3}{6\lambda}. 
 $$

  \subsection{Equations for $\mathfrak C$}

Choosing as above  homogeneous coordinates  $(x_1,x_2,x_3)$  in $\BP^2$ so that $E$ can be written in Hesse normal form:
$$
E(\la):\ \sum_i x_i^3 + 6\lambda x_1x_2x_3 =0, \quad \lambda\in \CC,
$$
we denote by $(y_1,y_2,y_3)$ the dual coordinates in $\BP^{2\vee}$.  Since the operations of duality and adjunction are equivariant with respect to the action of $\widehat G$, the quotient of the extended Heisenberg group $\widehat\sH$ by its centre, the cubic $D$ can be also written in Hesse normal form in $y_1,y_2,y_3$:

$$
D= E(\frak C)= \ \sum_i y_i^3+6\frak C y_1y_2y_3 = 0,  \quad \text{for some $\frak C=\frak C(\lambda)\in \PP^1$}
$$

In order to compute  $\frak C(\lambda)$ we proceed as follows.

The curve $D$ is determined by the condition of passing through the 9 singular points of $C$, dual of the   9 inflectional tangents of $E$. Since these 9 points are an orbit of the group $\widehat G$, and since    $\widehat G$ acts on $D$, it suffices to impose the condition that $D$ contains just one of the 9 points. We choose the flexpoint $(1,-1,0)$ of $E$. The partial derivatives of the Hesse polynomial of $E$ are:
$$
(x_1^2+2\lambda x_2x_3,x_2^2+2\lambda x_1x_3, x_3^2+2\lambda x_1x_2)
$$
and therefore the coordinates of the inflectional tangent at 
 $(1,-1,0)$ are  $(1,1,-2\lambda)$. Imposing to $D$ to contain this point we obtain the condition: 
 
$$
2-8\lambda^3-12\lambda\frak C=0
$$
which gives:

\begin{equation}\label{E:mulambda}
 \frak C (\lambda)    = \frac{1-4\lambda^3}{6\lambda}  
\end{equation}

Thus we get the following expression for $D$:
\begin{equation}\label{E:D}
  D:\ \lambda\left(\sum_iy_i^3\right) + (1-4\lambda^3)y_1y_2y_3=0.  
\end{equation}

 \subsection{Symmetry by the Hesse Alternating group $ \mathfrak A_4$}
 
 The previous  discussions explain the  underlying reasons for the following

\begin{lemma}
Let $g \in \mathfrak A_4$: then we have

(1) $\mathfrak H \circ g = g \circ \mathfrak H$.

(2)  $\mathfrak C \circ g = \bar{g} \circ \mathfrak C$,
where $\bar{g}$ is the projectivity where one takes the complex conjugate
coefficients of those of $g$.
\end{lemma}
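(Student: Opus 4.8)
The plan is to verify both identities by direct computation using the explicit rational formulas for $\mathfrak H$, $\mathfrak C$, and the generators of $\mathfrak A_4$. Recall that $\mathfrak A_4$ is generated by $\ga(\la) = \e\la$ and $\phi(\la) = \frac{1-\la}{2\la+1}$, so it suffices to check each assertion for these two generators and then extend multiplicatively (being careful that in (2) the map $g \mapsto \bar g$ on projectivities is an anti-homomorphism only up to the obvious sign conventions, but since $\overline{g_1 g_2} = \bar g_1 \bar g_2$ this causes no trouble: if (2) holds for $g_1$ and $g_2$ it holds for $g_1 g_2$). I will also need the fact, explained above, that duality and adjunction are equivariant under $\widehat G$, which is the geometric reason behind (2), but for the verification I can work purely with the one-variable formulas.

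For (1), I would first check $\mathfrak H \circ \ga = \ga \circ \mathfrak H$: since $\mathfrak H(\la) = -\frac{1+2\la^3}{6\la^2}$, replacing $\la$ by $\e\la$ multiplies the numerator's $\la^3$ term trivially (as $\e^3 = 1$) and the denominator by $\e^2$, giving $\mathfrak H(\e\la) = \e \mathfrak H(\la)$ after using $\e^{-2} = \e$; this is immediate. Then I would check $\mathfrak H \circ \phi = \phi \circ \mathfrak H$ by substituting $\phi(\la) = \frac{1-\la}{2\la+1}$ into $\mathfrak H$ and simplifying; this is a routine but slightly lengthy rational-function identity, most cleanly done by clearing denominators and comparing the resulting cubic polynomials. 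Alternatively, and more conceptually, one invokes that $\phi$ is induced by the Lagrange-resolvent projectivity $\Phi$ leaving the Hesse pencil invariant, and that the Hessian construction is manifestly equivariant under any projectivity of $\BP^2$, so $\mathfrak H(E(\phi(\la))) = \mathfrak H(\Phi(E(\la))) = \Phi(\mathfrak H(E(\la))) = E(\phi(\mathfrak H(\la)))$; this avoids computation entirely.

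For (2), the key point is that $\mathfrak C$ involves duality (passing to $\BP^{2\vee}$) and then adjunction back, and the Lagrange-resolvent projectivity $\Phi$ has real coefficients but its action on $\BP^{2\vee}$ via the inverse-transpose introduces the complex conjugate (since $\Phi$ itself has entries involving $\e$ — wait, $\ga$ is the one with $\e$). More precisely: $\mathfrak C \circ g$ corresponds geometrically to $g$ acting on $E$, then dualizing; on the dual side $g$ acts through $(g^{-1})^t$, and because $\mathfrak C$ lands in $\BP^{2\vee}$ with a built-in identification (via the standard scalar product) of $\BP^{2\vee}$ back with $\BP^2$, the net effect is that the coefficients get conjugated precisely when $g$ has non-real entries, i.e.\ when $g$ involves $\ga$. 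For the verification I would again reduce to generators: check $\mathfrak C(\e\la) = \overline{\ga}(\mathfrak C(\la)) = \bar\e \,\mathfrak C(\la) = \e^2 \mathfrak C(\la)$ using $\mathfrak C(\la) = \frac{1-4\la^3}{6\la}$, where replacing $\la$ by $\e\la$ leaves the numerator unchanged and multiplies the denominator by $\e$, giving $\mathfrak C(\e\la) = \e^{-1}\mathfrak C(\la) = \e^2\mathfrak C(\la)$ — matching $\bar\ga$ since $\bar\e = \e^2$. Then check $\mathfrak C \circ \phi = \phi \circ \mathfrak C$ (note $\phi$ has real coefficients so $\bar\phi = \phi$) by substituting and simplifying the rational identity.

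The main obstacle is the $\phi$-equivariance of $\mathfrak C$ in part (2): unlike the $\ga$ case, which is a one-line check, verifying $\mathfrak C\bigl(\frac{1-\la}{2\la+1}\bigr) = \frac{1 - \mathfrak C(\la)}{2\mathfrak C(\la)+1}$ requires substituting the degree-$3$ formula for $\mathfrak C$ into a Möbius transformation and checking that two rational functions of $\la$ agree; after clearing denominators this becomes an identity of polynomials of degree $3$ in $\la$ which one matches coefficient by coefficient. The conceptual shortcut — that $\phi$ comes from the projectivity $\Phi$ preserving the Hesse pencil, that $\Phi$ has real coefficients so duality and adjunction carry it to $\bar\Phi = \Phi$ on $\BP^{2\vee}$, and hence $\mathfrak C$ commutes with $\phi$ — is cleaner and is exactly the "underlying reason" the preamble alludes to; I would present the geometric argument as the main proof and relegate the explicit rational-function checks to a remark or leave them to the reader.
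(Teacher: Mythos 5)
Your proposal is correct and follows essentially the same route as the paper: reduce to the generators $\ga$ and $\phi$ of $\mathfrak A_4$, verify $\mathfrak H(\e\la)=\e\,\mathfrak H(\la)$ and $\mathfrak C(\e\la)=\e^2\,\mathfrak C(\la)$ by the one-line substitution, and use that $\phi$ is real, so that both parts reduce to checking that $\mathfrak H$ and $\mathfrak C$ commute with $\phi$ -- a verification the paper, like you, treats as routine (by direct substitution or by the equivariance discussion preceding the lemma). One caveat about the ``conceptual shortcut'' you intend to feature for (2): the Lagrange projectivity $\Phi$ is \emph{not} real (its entries involve $\e$); only the induced M\"obius map $\phi(\la)=\frac{1-\la}{2\la+1}$ on the pencil parameter is real, so the assertion that duality and adjunction carry $\Phi$ to $\bar\Phi=\Phi$ is false as stated, and you should either retain the explicit rational-function check of $\mathfrak C\circ\phi=\phi\circ\mathfrak C$ or argue via the inverse-transpose action together with the observation that $\bar\Phi$ differs from $\Phi$ by a coordinate permutation preserving every member of the Hesse pencil.
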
 
\begin{proof}
Since $\mathfrak A_4$ is generated by $\ga$ and $\phi$, and $\phi$ is real, 
it suffices to verify that $\mathfrak C, \mathfrak H$ commute with $\phi$,
and, concerning $\ga$,  one sees right away that 
$$\mathfrak H (\e \la) = \e \mathfrak H ( \la), \ \ \   \mathfrak C (\e \la) = \e^2 \mathfrak C ( \la).$$ 
\end{proof}

\begin{corollary}
The set of fixed points for $\mathfrak H^{\circ n}$ consists of $\mathfrak A_4$-orbits.

The set of fixed points for $\mathfrak C^{\circ n}$ consists of $\mathfrak A_4$-orbits
if $n$ is even.
\end{corollary}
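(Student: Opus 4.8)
The plan is to derive the Corollary purely formally from the commutation relations of the preceding Lemma, with no further geometric input. For $\mathfrak{H}$: relation (1), $\mathfrak{H}\circ g = g\circ\mathfrak{H}$ for every $g\in\mathfrak{A}_4$, iterates at once to $\mathfrak{H}^{\circ n}\circ g = g\circ\mathfrak{H}^{\circ n}$ for all $n\ge 1$, by pushing $g$ past one copy of $\mathfrak{H}$ at a time. Consequently, if $x\in\PP^1(\CC)$ satisfies $\mathfrak{H}^{\circ n}(x)=x$, then $\mathfrak{H}^{\circ n}(g(x)) = g(\mathfrak{H}^{\circ n}(x)) = g(x)$, so $g(x)$ is again a fixed point. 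Letting $g$ range over $\mathfrak{A}_4$ shows that $\Fix(\mathfrak{H}^{\circ n})$ is $\mathfrak{A}_4$-stable, hence a disjoint union of $\mathfrak{A}_4$-orbits; this is the first assertion.

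For $\mathfrak{C}$ the idea is that relation (2), $\mathfrak{C}\circ g = \bar g\circ\mathfrak{C}$, turns into a genuine commutation relation once composed with itself, because conjugation is an involution: $\mathfrak{C}^{\circ 2}\circ g = \mathfrak{C}\circ(\bar g\circ\mathfrak{C}) = (\mathfrak{C}\circ\bar g)\circ\mathfrak{C} = (\bar{\bar g}\circ\mathfrak{C})\circ\mathfrak{C} = g\circ\mathfrak{C}^{\circ 2}$. Iterating this bookkeeping gives $\mathfrak{C}^{\circ n}\circ g = g\circ\mathfrak{C}^{\circ n}$ for $n$ even and $\mathfrak{C}^{\circ n}\circ g = \bar g\circ\mathfrak{C}^{\circ n}$ for $n$ odd. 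Hence for $n=2m$ the map $\mathfrak{C}^{\circ n}$ commutes with all of $\mathfrak{A}_4$, and the argument given for $\mathfrak{H}$ applies word for word: $x\in\Fix(\mathfrak{C}^{\circ 2m})$ implies $g(x)\in\Fix(\mathfrak{C}^{\circ 2m})$, so this set is a union of $\mathfrak{A}_4$-orbits.

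I expect no real obstacle here; the only content is the handling of complex conjugation in (2), already isolated in the Lemma. The single point deserving comment is the necessity of the hypothesis ``$n$ even'' for $\mathfrak{C}$: for odd $n$ one merely obtains $\mathfrak{C}^{\circ n}(g(x)) = \bar g(x)$ from $\mathfrak{C}^{\circ n}(x)=x$, which lies in $\Fix(\mathfrak{C}^{\circ n})$ only when $\bar g=g$. Thus $\Fix(\mathfrak{C}^{\circ n})$ is in general stable only under the real subgroup of $\mathfrak{A}_4$ --- the order-two group generated by $\phi$ --- and not under $\ga$; the case $n=1$ is an explicit instance, since $-\tfrac12\in\Fix(\mathfrak{C})$ while $\ga(-\tfrac12)=-\tfrac12\,\e$ satisfies $\mathfrak{C}(-\tfrac12\,\e)=-\tfrac12\,\e^{2}\ne -\tfrac12\,\e$ (compare Theorem~\ref{D-periodic}). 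So in the write-up I would present the $\mathfrak{H}$ case first, then observe that squaring relation (2) reduces the $\mathfrak{C}$ case to the same argument, and finally record this last remark to motivate the parity condition.
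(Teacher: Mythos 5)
Your proof is correct and follows essentially the same route as the paper: iterate the commutation relation $\mathfrak H\circ g=g\circ\mathfrak H$, observe that $\mathfrak C^{\circ 2}\circ g=g\circ\mathfrak C^{\circ 2}$ because conjugation is an involution, and conclude that a map commuting with $g$ carries fixed points to fixed points. Your closing remark explaining why the parity hypothesis is needed for $\mathfrak C$ (with the example $\gamma(-\tfrac12)$) is a correct and welcome addition, but not a departure from the paper's argument.
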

\begin{proof}
$\mathfrak H \circ g = g \circ \mathfrak H$ implies 
$\mathfrak H^{\circ n} \circ g = g \circ \mathfrak H^{\circ n}$.

It is obvious that if $f \circ g = g \circ f$, then $f(z) = z \Rightarrow 
f(g z) = gz$. 

Similarly for the second assertion.

\end{proof}

The previous Corollary justifies the interest for the special  $\mathfrak A_4$-orbits,
those with less than $12$ elements (and hence corresponding to curves with extra automorphisms).

\begin{remark}
The special $\mathfrak A_4$-orbits in the Hesse pencil are:
\begin{itemize}
\item
the orbit $\sT$ of $\infty$, consisting of the four triangles;
\item
the orbit $\sF$ of $\la=0$, which consists of $\{ 0, 1 , \e, \e^2\}$,
that is, the set of {\bf equianharmonic} (Fermat) elliptic curves;
\item
the set $Harm$ of {\bf harmonic} elliptic curves: this set has 6 elements, 
 is the orbit
of the two fixed points for $\phi$, and consists of  the six points:
$$ \frac{1}{2}  \e^i  ( - 1 \pm \sqrt{3}) , \ i=1,2,3.$$
\end{itemize}

The respective equations are:
$$\sF  = \{ \la |  
\la (\la^3 -1)=0\}$$ for the four Fermat curves,
and since 
$$(2 \la ^2 + 2 \la -1 ) ( 2 \e^2 \la ^2 + 2 \e \la -1 )  ( 2 \e  \la ^2 + 2  \e^2 \la -1 ) = 
 8 \la ^ 6 + 20 \la^3  -1$$

$$ Harm = \{ \la| 
 8 \la ^ 6 + 20 \la^3  -1 =0\}$$
 for the six harmonic curves.
\end{remark}

A direct consequence of the knowledge of the special orbits is the following
result of Hollcroft \cite{hollcroft}:

\begin{corollary}\label{harmonic}
The fixed points of $\mathfrak H$ are the points of $\sT$, and the set of periodic points  of period exactly 2 for $\mathfrak H$ (these are the fixpoints of $\mathfrak H^{\circ 2}$ which are not fixpoints of $\mathfrak H$) is the set $Harm$ of the 6 harmonic cubics. 
Instead, the fixed points of $\mathfrak C$ are the real points of  $\sT\cup Harm$, while the set of periodic points of period  exactly 2 for $\mathfrak C$ is the set of non-real points of $\sT\cup Harm$. 
\end{corollary}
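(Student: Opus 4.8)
The plan is to work throughout with the explicit degree-$3$ rational maps $\mathfrak H(\la) = -\tfrac{1+2\la^3}{6\la^2}$ and $\mathfrak C(\la) = \tfrac{1-4\la^3}{6\la}$ on $\PP^1$, together with the $\mathfrak A_4$-equivariance recorded in the previous Corollary. First I would pin down the fixed loci by a direct computation. From $\mathfrak H(\la) - \la = -(1+8\la^3)/(6\la^2)$ one gets that $\Fix(\mathfrak H)$ is $\la = \infty$ together with the three roots of $8\la^3 = -1$, i.e. exactly the orbit $\sT$ of the four triangles (this is already noted in Section~2). From $\mathfrak C(\la) - \la = (1 - 6\la^2 - 4\la^3)/(6\la)$ and the factorization $4\la^3 + 6\la^2 - 1 = (2\la+1)(2\la^2 + 2\la - 1)$ one gets $\Fix(\mathfrak C) = \{\infty,\ -\tfrac12,\ -\tfrac12 \pm \tfrac{\sqrt3}{2}\}$; comparing with the explicit descriptions of $\sT$ and $Harm$ in the Remark, these four points are precisely the real points of $\sT$ (namely $\infty$ and $-\tfrac12$) together with the real points of $Harm$ (namely $-\tfrac12 \pm \tfrac{\sqrt3}{2}$). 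This already proves the assertions about $\Fix(\mathfrak H)$ and $\Fix(\mathfrak C)$, granting the identification of the full ``period $\le 2$'' locus with $\sT \cup Harm$ in each case.

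For the period-$2$ points the key observation is that $\mathfrak H^{\circ 2}$ and $\mathfrak C^{\circ 2}$ are rational self-maps of $\PP^1$ of degree $3^2 = 9$, hence each has exactly $10$ fixed points counted with multiplicity, and by the Corollary applied with $n = 2$ each of these fixed loci is a union of $\mathfrak A_4$-orbits. I would treat $\mathfrak C$ first, where no multiplicity subtlety arises. Since $\Fix(\mathfrak C) \subseteq \Fix(\mathfrak C^{\circ 2})$, the latter is $\mathfrak A_4$-invariant and meets both the single orbit $\sT$ (at $-\tfrac12$) and the single orbit $Harm$ (at $-\tfrac12 + \tfrac{\sqrt3}{2}$), it must contain all of $\sT \cup Harm$. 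But these are $4 + 6 = 10$ distinct points, which saturates the total multiplicity, so $\Fix(\mathfrak C^{\circ 2}) = \sT \cup Harm$ with every fixed point simple; subtracting the fixed points of $\mathfrak C$ (the real points, by the first step) leaves exactly the non-real points of $\sT \cup Harm$ as the points of exact period $2$.

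The case of $\mathfrak H$ needs one more ingredient, because a priori one of the four triangles could be a multiple fixed point of $\mathfrak H^{\circ 2}$ --- this is the one genuinely delicate point. To rule it out I would compute the multiplier of $\mathfrak H$ at $\infty$ in the chart $w = 1/\la$, where $\mathfrak H$ becomes $w \mapsto -6w/(w^3+2)$ with derivative $-3$ at $w = 0$. Since $\mathfrak A_4$ commutes with $\mathfrak H$ and acts transitively on $\sT$, every triangle has multiplier $-3$ for $\mathfrak H$, hence multiplier $9 \ne 1$ for $\mathfrak H^{\circ 2}$, so each triangle is a \emph{simple} fixed point of $\mathfrak H^{\circ 2}$. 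Thus $\Fix(\mathfrak H^{\circ 2})$ consists of $\sT$ together with a further $\mathfrak A_4$-invariant set of total multiplicity $6$, disjoint from $\sT$.

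Finally I would exhibit $Harm$ inside $\Fix(\mathfrak H^{\circ 2})$ by a one-line computation, $\mathfrak H\bigl(\tfrac{-1+\sqrt3}{2}\bigr) = \tfrac{-1-\sqrt3}{2}$, which shows the two real harmonic points form a $\mathfrak H$-orbit of size $2$; applying $\ga\colon \la \mapsto \e\la$, which commutes with $\mathfrak H$, spreads this over the whole orbit $Harm$. Since $|Harm| = 6$ and $Harm \cap \sT = \emptyset$, these six distinct points exhaust the remaining fixed points of $\mathfrak H^{\circ 2}$, all simple, and none of them is fixed by $\mathfrak H$ (as $Harm$ is disjoint from $\sT = \Fix(\mathfrak H)$). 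Hence $Harm$ is exactly the exact-period-$2$ locus of $\mathfrak H$, and the proof is complete. The main obstacle is precisely the multiplicity bookkeeping for $\mathfrak H^{\circ 2}$ at the triangles, which is why the multiplier computation cannot be bypassed; everything else is elementary once the $\mathfrak A_4$-symmetry is in hand.
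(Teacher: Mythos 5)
Your proposal is correct and takes essentially the same route as the paper: both rest on the count of $3^2+1=10$ fixed points of the squared map (Proposition \ref{P:fixpts}) combined with the $\mathfrak A_4$-equivariance and the classification of special orbits, for $\mathfrak C$ the two arguments are virtually identical, and for $\mathfrak H$ you merely replace the paper's abstract identification of the residual six points as the unique $6$-element orbit $Harm$ by an explicit computation ($\mathfrak H$ swapping $\tfrac{-1\pm\sqrt3}{2}$, spread by $\ga$). Your extra multiplier check (multiplier $-3$ at the triangles, hence $9\neq 1$ for $\mathfrak H^{\circ 2}$) is a welcome refinement, since the paper's subtraction $3^2+1-(3+1)=6$ implicitly assumes the triangles are simple fixed points of $\mathfrak H^{\circ 2}$ (which could alternatively be forced by the orbit-size bookkeeping itself).
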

\begin{proof}
We borrow from Section 3 that a rational map of degree $d$ has precisely 
$(d+1)$ fixpoints, counted with multiplicity.

The  periodic points of period $2$ for $\mathfrak H$
are  the fixpoints of $\mathfrak H^{\circ 2}$ which are not fixpoints of
$\mathfrak H$. Hence we get $3^2 + 1 - (3+1)=6$ points.

We have a union of $\frak A_4$-orbits, where obviously the points of the orbit have the same
multiplicity. Since there are only orbits with $4$, $6$ or $12$ elements,
the only possibility is an orbit of $6$ elements, counted with
multiplicity $1$. And then we know that  this orbit is the orbit of harmonic cubics.

A similar argument applies  for $\mathfrak C$, showing that the fixed points for  $\mathfrak C^{\circ 2}$ are again the points of $\sT\cup Harm$. One then observes that the fixed points of $\mathfrak C$ are $\infty$ and the 3 real roots of  $4\lambda^3+6\lambda^2-1=0$  (see \eqref{E:fixptsC}).
\end{proof}

\begin{remark}
For completeness we also give the equations for the $j$-invariant
 of the curves in the Hesse pencil (see \cite{b-k}, Theorem 10, page 302):

$$ j(\la) = \frac{ 8 \la^3 ( 8  - 8 \la^3)^3  } { ( 8 \la^3  + 1)^3 }.$$

Here, $ j = 0$ for the equianharmonic curves, $j = \infty$
for the four triangles, $ j = 4^3=64$ for the harmonic cubics.

 Enriques and Chisini use instead another nonstandard form of the $j$-invariant,
for which $j = \infty$ for the harmonic cubics.

\end{remark}

We finally observe the following:

\begin{remark}
 The transformations $\mathfrak H, \mathfrak C$
are related by the involutory projectivity
$$ \iota (\la) : = - \frac{1}{2 \la},$$ namely 
$$  \mathfrak C (\la) =  \mathfrak H ( - \frac{1}{2 \la}) = \mathfrak H \circ \iota (\la).$$

In particular the semigroup generated by $\mathfrak H, \mathfrak C$ is contained in the semigroup generated by $\mathfrak H, \iota$.

\end{remark}

\begin{proposition}
The Alternating Hesse group $\frak A_4$ and the involution $\iota$ generate
a subgroup of order $24$ of $\PP GL(2, \CC)$, isomorphic to $\mathfrak S_4$,
called the {\bf Symmetric Hesse group} $\mathfrak S_4$.
\end{proposition}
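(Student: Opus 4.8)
The plan is to prove the statement in three steps: first show that $\iota$ normalizes the Alternating Hesse group $\mathfrak A_4$; then show that $\iota\notin\mathfrak A_4$, so that $\langle\mathfrak A_4,\iota\rangle$ has order exactly $24$; and finally identify this order-$24$ subgroup of $\PP GL(2,\CC)$ with $\mathfrak S_4$ by invoking the classification of finite subgroups of $\PP GL(2,\CC)$, using that it contains a copy of $A_4$.

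For the first step, recall that $\mathfrak A_4=\langle\gamma,\phi\rangle$ with $\gamma(\la)=\epsilon\la$ and $\phi(\la)=\frac{1-\la}{2\la+1}$, and that $\iota(\la)=-\frac1{2\la}$ satisfies $\iota^2=\mathrm{id}$. Since $g\mapsto\iota\circ g\circ\iota$ is then an automorphism of $\PP GL(2,\CC)$, it suffices to check that it carries the two generators of $\mathfrak A_4$ back into $\mathfrak A_4$. A short substitution gives $\iota\circ\gamma\circ\iota=\gamma^{-1}$ and $\iota\circ\phi\circ\iota=\phi$; equivalently, conjugation by $\iota$ restricts on $\mathfrak A_4$ to the automorphism $g\mapsto\bar g$ sending a projectivity to the one with complex-conjugated matrix entries, which is consistent with the equivariance $\mathfrak C\circ g=\bar g\circ\mathfrak C$ of Lemma~2.1 together with $\mathfrak C=\mathfrak H\circ\iota$ and $\mathfrak H\circ g=g\circ\mathfrak H$. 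Hence $\iota$ normalizes $\mathfrak A_4$, so the group it generates with $\mathfrak A_4$ is $G:=\langle\mathfrak A_4,\iota\rangle=\mathfrak A_4\cup\mathfrak A_4\iota$, of order at most $24$.

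For the second step, every element of $\mathfrak A_4$ permutes the four-element set $\sT=\{\infty,-\tfrac12,-\tfrac{\epsilon}2,-\tfrac{\epsilon^2}2\}$ of the four triangles in the Hesse pencil, whereas $\iota(\infty)=0\notin\sT$ (in fact $\iota$ maps $\sT$ bijectively onto the Fermat orbit $\sF=\{0,1,\epsilon,\epsilon^2\}$ and $\sF$ onto $\sT$). Thus $\iota\notin\mathfrak A_4$, the coset $\mathfrak A_4\iota$ is disjoint from $\mathfrak A_4$, and $|G|=24$; since $\mathfrak A_4\cong A_4$, the group $G$ contains a copy of $A_4$ as an index-two, hence normal, subgroup. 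For the third step, recall Klein's classification: a finite subgroup of $\PP GL(2,\CC)$ is cyclic, dihedral, or isomorphic to one of $A_4,\ \mathfrak S_4,\ A_5$. The only ones of order $24$ are $\Z/24$, the dihedral group of order $24$, and $\mathfrak S_4$; of these, neither $\Z/24$ nor the dihedral group contains a subgroup isomorphic to $A_4$ (subgroups of a dihedral group are cyclic or dihedral, and $A_4$, being non-abelian with no element of order $6$, is neither). Hence $G\cong\mathfrak S_4$, and $\mathfrak A_4$ is exactly its subgroup of even permutations, which explains its name.

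I do not expect a deep obstacle here: the only genuine computation is the pair of conjugations $\iota\circ\gamma\circ\iota=\gamma^{-1}$ and $\iota\circ\phi\circ\iota=\phi$ in the first step, where one must be careful with signs. One could instead try to deduce $\iota\circ g\circ\iota=\bar g$ formally from $\mathfrak C=\mathfrak H\circ\iota$ and the equivariances, but cancelling the non-invertible degree-$3$ map $\mathfrak H$ needs the extra remark that its only M\"obius deck transformation is the identity, so the direct verification on $\gamma$ and $\phi$ is the most economical route.
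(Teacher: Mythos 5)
Your proof is correct, and its first half coincides with the paper's: the paper also reduces to the two conjugation identities $\iota\phi=\phi\iota$ and $\iota\ga\iota=\ga^2$ (your $\iota\ga\iota=\ga^{-1}$), concluding that $\iota$ normalizes $\frak A_4$. Where you diverge is in the identification of the resulting order-$24$ group: the paper simply declares the semidirect product $\frak A_4\rtimes\ZZ/2$ to be $\frak S_4$ via the explicit assignment $\phi \mapsto (1,2)(3,4)$, $\ga \mapsto (1,2,3)$, $\iota \mapsto (1,2)$, leaving implicit both that $\iota\notin\frak A_4$ (so the order really is $24$) and that the twisting automorphism is the outer one (so the extension is not $A_4\times\ZZ/2$). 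You instead make the first point explicit and cleanly, via the action on the triangle orbit $\sT$ versus the Fermat orbit $\sF$, and you settle the second point by invoking Klein's classification of finite subgroups of $\PP GL(2,\CC)$: among the order-$24$ possibilities only $\frak S_4$ contains a copy of $A_4$. This buys you robustness (no need to check that $\ga\mapsto\ga^{-1}$, $\phi\mapsto\phi$ is an outer automorphism of $A_4$, which is the hidden content of the paper's generator assignment), at the cost of appealing to an external classification theorem, whereas the paper's route is self-contained and exhibits the isomorphism concretely, which is also what underlies the later embedding of $\frak S_4$ into $\frak S_8$ and $\frak S_6$ in the subsequent remark. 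Your closing caution about not cancelling the degree-$3$ map $\frak H$ to deduce $\iota g\iota=\bar g$ formally is well taken; the direct check on the generators is indeed the economical argument, and it is the one the paper uses.
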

 \begin{proof}
 We know that $\frak A_4$ is generated by $\phi, \ga$.
 
 Direct calculation shows that
 $$ \iota \phi = \phi \iota , \ \iota \ga \iota = \ga^2.$$
 
 Hence $\iota$ normalizes $\frak A_4$ and we get a semidirect product of 
 $\frak A_4$ with $\ZZ/2$. This is isomorphic to $\frak S_4$,
 as follows: just send 
 $$ \phi \mapsto (1,2)(3,4) , \ \ga \mapsto (1,2,3), \ \iota \mapsto (1,2).$$
 \end{proof}

 \begin{remark}
 To understand the action of $\frak S_4$, since we know the action of $\frak A_4$,
 it suffices to notice that $\iota$ transposes the pairs
 $$ \{0, \infty\}, \{ 1, -\frac{1}{2}\}, \{ \e,  - \frac{\e^2}{2}\}, \{ \e^2, - \frac{\e}{2}\},$$
 in particular exchanging $\sT$ (triangles) with $\sF$ (Fermat).
 
 While $\phi$ leaves $\sT, \sF$ respectively invariant and transposes the pairs
 $$ \{0,  1\}, \{ \infty, -\frac{1}{2}\}, \{ \e,  \e^2 \}, \{ - \frac{\e^2}{2}, - \frac{\e}{2}\}.$$
 
 We get in this way an embedding of $\frak S_4$ into $\frak S_8$.
 
 $\frak S_4$ preserves the set $Harm$ of Harmonic points, and we get in this
 way an embedding of $\frak S_4$ into $\frak S_6$\footnote{This corresponds to the conjugacy action of $\frak S_4$ on the $4$-cycles.}.
 
 For instance to $\iota$ corresponds a product of 3 transpositions, since
 $$ \frac{1}{2} \e^i ( -1 \pm \sqrt{3}) \mapsto  \frac{1}{2} \e^{-i} ( -1 \mp \sqrt{3}).$$
 
 \end{remark}

  \begin{remark}
 We consider now  the problem of the description of the action of $\frak H$ on
 the j-invariant, that is the expression of the action of  $\frak H$ on
 $\PP^1 / \frak A_4$.
 
 To this purpose, set $ \mu : = 8 \la^3$, so that 
 $$ j(\la) = \frac{ \mu (8 - \mu)^3}{( \mu +1)^3 }.$$
 
 Then, setting $h : = \frak H (\la)$, then $ \mu' : =  8 h^3 = - \frac{( 4 + \mu)^3}{3^3 \mu^2}$,
 hence
 $$ \frak H (j) = \frac{ \mu' (8 - \mu')^3}{( \mu' +1)^3} =  \frac{ (4+\mu)^3 ( (4+\mu)^3+ 6^3 \mu^2)^3} { 3^3 \mu^2 ((4+\mu)^3-3^3 \mu^2 )^3} .$$
 
 Observing that under  $ \frak H$ the inverse image of $\infty$ is $0$ counted with multiplicity $2$ (corresponding to the Fermat cubics, for which $j=0$, and $\infty$ with multiplicity $1$, corresponding to the triangles) we see that there are constants $d_i$ such that
 $$ \frak H (j) =  \frac{ d_3 j^3 + d_2 j^2 + d_1 j + d_0}{3^3 j^2}. $$
 
 Substituting the expression of $j$ as a rational function of $\mu$, we can determine the  constants  $d_i$ and obtain
 (in performing the calculation, after clearing denominators, it is convenient to set first $\mu=0$, in order to obtain $d_0$, and  then $\mu = -1$, 
 in order to obtain $d_3$: then one has to look at the coefficients of $\mu^{10}$ and then of $ \mu^9$):
 $$  d_0 = 2^{24} , \ d_3 = -1, \ d_2 = 3 \cdot 2^8 , \ d_1 =  -164352 =  - 3 \cdot 2^{16} , \  $$
 hence
 $$ \frak H (j) =   - \frac{   (j - 2^8)^3 } {3^3 j^2}. $$
 A similar calculation was performed in \cite{poppam}, Thm. 3.4.
 \end{remark}

 \begin{remark}
 We believe that the action of $\frak H$ does not descend to 
 $\PP^1 / \frak S_4$.
 
 The reason is, $\frak H$ commutes with every element of $\frak A_4$.
 And every automorphism of $\frak S_4$ which is the identity on $\frak A_4$
 must be the identity: because $\phi = (1,2)$ would be  mapped to another 
 transposition $\tau$ commuting with $(1,2)(3,4)$ and conjugating $\ga = (1,2,3)$
 to its inverse. The first property implies that if $\tau$  is not $(1,2)$, it is $(3,4)$;
 but the latter does not conjugate $\ga$ as required.
 
 Then we observe that $\frak H$ does not commute with $\iota$, since
 $$  \iota \circ \frak H (\la) = \frac{3 \la^2} { 1 + 2 \la^3} , \ \ 
 \frak H \circ  \iota (\la) = 
 \frak C (\la) = \frac{1 - 4 \la^3} { 6 \la}.$$
\end{remark}

\subsection{The original problem}

We now  consider 
$$ \frak C^{\circ 2} (E(\la)) = E (\frak C^{\circ 2} (\la)) $$ 
and  by the previous computation
we have:

$$
\frak C^{\circ 2} (\la)= \frac{1-4\frak C(\lambda)^3}{6\frak C(\lambda)}
= \frac{\lambda}{1-4\lambda^3} - \frac{(1-4\lambda^3)^2}{54\lambda^2}.
$$

We   say that   $\lambda\in \CC $ is  \emph{$\frak C^{\circ 2}$-periodic}
if for some $n \geq 1$
 we have $\la = \frak C^{\circ 2n} (\la)$.
  The smallest $n$ for which this occurs is called the  $\frak C^{\circ 2}$-\emph{period} of $\lambda$. 
 
 The condition $E(\la) =  \frak C^{\circ 2} (E(\la)) $  translates into the equation
  $\lambda = \frak C^2 (\lambda)$.

It is satisfied by the fixpoints of $\frak C$, 
$$
 \infty, -1/2,\quad  -1/2 \pm\sqrt{3}/2
 $$
and, in view of Corollary \ref{harmonic},
by the six harmonic cubics.

We  shall soon show  that the number of  periodic points is always infinite for each holomorphic function  $f$  in our semigroup. 

We leave aside for the time being the problem of the exact determination of the cardinality of the set 
$\Lambda_n$, defined as the set of periodic points of $\frak C^2$
of period exactly $n$.


 \subsection{The geometric semigroup}

 \begin{defin}
 
 (I) Let $\sW(h,c)$ be  the semigroup freely generated by the letters $h,c$,
and let  $W(h,c) $ be a word in   $\sW(h,c)$.
 
(II)  Consider the homomorphism $\psi : \sW(h,c) \ra End(\PP^1)$
 defined by
 
 $$ \psi (h) : = \frak H, \ \ \ \psi (c) : = \mathfrak C,$$
 and denote its image by 
 $$ \sW(\frak H,\frak C) : = Im (\psi).$$
 
 \end{defin}
   We observe that both endomorphisms $\mathfrak H$ and $\mathfrak C$
 are degree $3$ maps of $\PP^1$ to $\PP^1$, hence   the following is clear:
 
 \begin{remark}
 
 Each holomorphic function $f$ in the semigroup, $f : = \psi (W(h,c) )$ has degree $3^e$, where $e$ is the exponent (that is, the length) of the word $W(h,c) $.  

In particular, $f$ has $3^e + 1$ fixpoints counted with multiplicity, 
 in view of the forthcoming  proposition in the next section.
 \end{remark}
 
 \section{Generalities on periodic points and real periodic points of rational maps}
 
 \begin{proposition}\label{P:fixpts}
 Let $f: \PP^1 \ra \PP^1$ be a holomorphic map of degree $m$. Then:
 
 (i) The number of fixpoints of $f$, counted with multiplicity, equals $ m+1$.
 
 (ii) The multiplicity of a fixpoint $P$ is calculated as follows, in terms of   a local coordinate $z$ for which $z(P)=0$: consider the Taylor development of $f$ at $P$,
 $$
 f(z)= a z + b z^h + {\rm higher \ order \ terms},$$
 where $b\ne 0,  \  h>1$.

 Then the multiplicity as a fixed point equals the multiplicity of $ f(z) -z$ at $P$, which is $1$ for  $a\ne 1$,
 otherwise it  equals $h$.
 
 (iii) If $ m >1$ the cardinality of the set $Per (f)$ of  periodic points of $f$, 
 $Per (f): = \{ P | \exists r \geq 1, f^{\circ r} (P) = P\}$, is infinite.
 
 \end{proposition}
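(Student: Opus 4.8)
The plan is to prove the three parts in turn, with part (iii) resting on parts (i) and (ii) applied to the iterates of $f$. For (i) the idea is to reduce the fixed-point count to counting zeros of a single homogeneous form: write $f = [F_0 : F_1]$ with $F_0, F_1 \in \CC[x_0,x_1]$ coprime and homogeneous of degree $m$; then the fixed locus of $f$ in $\PP^1$ is the zero set of
$$R(x_0,x_1) \;:=\; x_1\,F_0(x_0,x_1) - x_0\,F_1(x_0,x_1),$$
a homogeneous form of degree $m+1$. One checks that $R \equiv 0$ forces $F_0 = c\,x_0$, $F_1 = c\,x_1$, i.e. $f = \Id$ and $m=1$; excluding this trivial case, $R$ has exactly $m+1$ zeros in $\PP^1$ counted with multiplicity. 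It then remains only to match $\mathrm{ord}_P R$ with the dynamical fixed-point multiplicity at $P$, which is the content of (ii).

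For (ii), fix a fixed point $P$. Since $f(P) = P \neq \infty$ we may choose an affine coordinate $z$ with $z(P) = 0$ in which $P$ is not a pole of $f$, and write $f(z) = f_1(z)/f_0(z)$ with $f_0(P) \neq 0$, where $f_0, f_1$ are the dehomogenisations of $F_0, F_1$. Then $f(z) - z = \big(f_1(z) - z\,f_0(z)\big)/f_0(z)$ with a unit in the denominator, so $\mathrm{ord}_P\big(f(z)-z\big) = \mathrm{ord}_P\big(f_1(z) - z\,f_0(z)\big) = \mathrm{ord}_P R$; a short change-of-variables computation (one multiplies $f(z)-z$ by a unit when changing coordinate) shows this order is independent of the local coordinate, so it is a well-defined multiplicity. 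Feeding in the Taylor development $f(z) = a z + b z^h + \cdots$ with $b \neq 0$, $h > 1$, gives $f(z)-z = (a-1)z + b z^h + \cdots$, of order $1$ if $a \neq 1$ and of order $h$ if $a = 1$; summing $\mathrm{ord}_P R$ over all fixed points and invoking (i) closes the loop. I expect this part to be pure bookkeeping, the only care being coordinate-independence and the fact that $f(z)-z \not\equiv 0$ near $P$ (true because $f \neq \Id$).

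For (iii) I would argue by contradiction. Set $\nu_P(F) := \mathrm{ord}_P\big(F(z)-z\big)$ for a fixed point $P$ of $F$. Every fixed point of $f^{\circ n}$ is periodic of period dividing $n$, so $\Fix(f^{\circ n}) \subseteq \mathrm{Per}(f)$, while parts (i) and (ii) applied to $f^{\circ n}$ (which has degree $m^n$) give $\sum_{P \in \Fix(f^{\circ n})} \nu_P(f^{\circ n}) = m^n + 1$. If $\mathrm{Per}(f)$ were finite of cardinality $s$, then for each $n$ some fixed point of $f^{\circ n}$ has $\nu_P(f^{\circ n}) \ge (m^n+1)/s$, and by the pigeonhole principle a single $P_0 \in \mathrm{Per}(f)$ satisfies $\nu_{P_0}(f^{\circ n}) \ge (m^n+1)/s$ for infinitely many $n$, hence $\nu_{P_0}(f^{\circ n}) \to \infty$ along those $n$. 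Letting $k$ be the exact period of $P_0$ and noting $k \mid n$ whenever $P_0 \in \Fix(f^{\circ n})$, one may replace $f$ by $g := f^{\circ k}$ (still of degree $\ge 2$), reducing to the statement that $P_0$ is a fixed point of $g$ with $\nu_{P_0}(g^{\circ \ell})$ unbounded.

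It then suffices to show that $\nu_{P_0}(g^{\circ \ell})$ is in fact bounded, which I would do by splitting on the multiplier $\lambda := g'(P_0)$, using part (ii) throughout. If $\lambda$ is not a root of unity (in particular if $\lambda = 0$), then $(g^{\circ \ell})'(P_0) = \lambda^{\ell} \neq 1$, so $\nu_{P_0}(g^{\circ \ell}) = 1$ for all $\ell$. If $\lambda$ is a primitive $q$-th root of unity, then $\nu_{P_0}(g^{\circ \ell}) = 1$ whenever $q \nmid \ell$; and when $q \mid \ell$, the germ $g^{\circ q}$ fixes $P_0$ with multiplier $\lambda^{q} = 1$ and is not the identity (its degree is $\ge 2$), so $g^{\circ q}(z) = z + c\,z^{p+1} + \cdots$ with $c \neq 0$, $p \ge 1$, whence a standard formal induction gives $g^{\circ \ell}(z) = z + \tfrac{\ell}{q}\,c\,z^{p+1} + \cdots$ and therefore $\nu_{P_0}(g^{\circ \ell}) = p+1$. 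In every case $\nu_{P_0}(g^{\circ \ell})$ is bounded (by $\max(1,p+1)$), contradicting its unboundedness, and (iii) follows. The main obstacle is precisely this rationally indifferent (parabolic) case: one must carry out the computation showing that iterating a germ $z \mapsto z + c\,z^{p+1} + \cdots$ adds one copy of $c\,z^{p+1}$ to the leading correction term at each step, so that $\mathrm{ord}_P\big(g^{\circ \ell}(z)-z\big)$ remains equal to $p+1$ and does not grow with $\ell$; everything else is routine.
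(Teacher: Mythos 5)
Your proposal is correct; all three parts go through, and it differs from the paper's proof in instructive ways. For (i) you count the zeros of the degree-$(m+1)$ form $x_1F_0-x_0F_1$, whereas the paper intersects the graph of $f$ (a divisor of bidegree $(1,m)$ in $\PP^1\times\PP^1$) with the diagonal (bidegree $(1,1)$); the two are equivalent, and your explicit form has the minor merit of exposing the degenerate case $f=\Id$, which both formulations implicitly exclude. Part (ii) is the same local computation as the paper's, with your coordinate-independence remark making explicit something the paper glosses. For (iii) the shared core is the multiplier trichotomy plus the parabolic iteration formula $h^{\circ r}(z)=z+rc\,z^{p+1}+\cdots$, but the global structure differs: the paper takes $n$ a common multiple of all the (finitely many) periods, sets $F=f^{\circ n}$, classifies the fixed points of $F$ by multiplier, and chooses $r$ not divisible by any of the root-of-unity orders so that every fixed point has the \emph{same} multiplicity for $F$ and for $F^{\circ r}$, contradicting $\deg(F^{\circ r})+1>\deg(F)+1$; you instead pigeonhole a single point $P_0$ whose multiplicity for $f^{\circ n}$ must grow like $(m^n+1)/s$, pass to $g=f^{\circ k}$ with $k$ the exact period, and prove that the fixed-point multiplicity of $P_0$ under all iterates of $g$ is bounded, computing the resonant case $q\mid\ell$ directly rather than avoiding it by a coprimality choice. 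Your route yields a slightly stronger local statement (uniform boundedness of $\nu_{P_0}(g^{\circ\ell})$ over all $\ell$) at the price of the pigeonhole bookkeeping; the paper's route sidesteps the resonant computation but needs the simultaneous choice of $r$ working for all periodic points at once. Either way the contradiction is the same tension between the exponentially growing global count $m^n+1$ from (i) and the bounded local multiplicities from (ii).
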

 \begin{proof}
 (i) follows since in $\PP^1 \times \PP^1$ the intersection number of the diagonal (a divisor of bidegree $(1,1)$) with the graph of a morphism of degree $m$ (a divisor of bidegree $(1,m)$) equals $m+1$.
 
 (ii) at the point $(P,P)$ we must calculate the intersection multiplicity of two curves
 which, in local coordinates, are given by the respective equations:
 $$ z_2 = z_1 , z_2 = f(z_1) ,$$
 equivalently,
$$  z_2 = z_1 , f(z_1)- z_1 =0.$$
 Hence the intersection multiplicity equals the multiplicity of the function 
$$ f(z) - z = (a-1) z + b z^h + {\rm higher \ order \ terms}.$$

 The assertion follows then right away.
 
 (iii)  Assume by contradiction that the set $Per(f)$ is finite, and consists of $k$
 points $P_1, \dots, P_k$.
 
 Let $n_i$ be the period of $P_i$, namely the smallest exponent ($\geq 1$) such that 
 $ f^{\circ {n_i}} (P_i) = P_i$.
 
 Take then $n$ to be a common multiple of the $n_i$'s,
 and denote $F : = f^{\circ {n}}$.
 
 Now, at the point $P_i$, which is fixed for each iterate $F^{\circ r}$ of $F$,
 we claim that there are three possibilities, writing as above at $P_i$ 

$$F(z) = a' z + b' z^{h'} + {\rm higher \ order \ terms}:$$

 (I) $a'=1$, and $P_i$ counts with the same multiplicity $h'$ for all the iterates
 of $F$;
 
 (II)  $a' \neq 1$ and $a'$ is not a root of unity:
 then $P_i$ counts with the same multiplicity $1$ for all the iterates
 of $F$;
 
 (III)  $a'$ is a primitive root of unity of order $c_i >1$, and then $P_i$ counts with the same multiplicity $1$ for all the iterates
$F^{\circ r}$ such that   $c_i$ does not divide $r$.

While (II), (III) are obvious, (I) needs the observation that then
 $$ F^{\circ r} (z) = z + r b' z^{h'} + {\rm higher \ order \ terms}.$$

{\bf Conclusion:} take an integer $r > 1 $ which is not divisible by any of the $c_i$'s of case (III),
and consider  $ F^{\circ r}$. By assumption, its only  fixpoints 
are the points $P_1, \dots, P_k$.  For each one of them the multiplicity as a fixpoint of
$F$ equals the one as a fixpoint of $ F^{\circ r}$.

This however contradicts statement (i), since the sum of the multiplicities for
$F$ equals $ deg(F) + 1= m^n + 1$, while for $ F^{\circ r}$
the sum of the multiplicities equals 
$$ deg(F^{\circ r}) + 1 = m^{nr} + 1 > m^n + 1.$$

 \end{proof}

 We shall now consider the case where $f$ (as it is the case for $\mathfrak H, \mathfrak C$)
 is a real rational function $f : \PP^1 \ra \PP^1$ of degree $m \geq 2$.
 
 Then $f (\PP^1(\RR) ) = \PP^1(\RR)  $, and we give the following definition.
 
 \begin{defin}
 If $f \in \RR (\la)$, then its real degree $r$ is defined through the equality 
 $$ H_1(f) [\PP^1(\RR)] = r [\PP^1(\RR)],$$
 where the fundamental class $[\PP^1(\RR)]$ is the positive generator of
 $H_1 (\PP^1(\RR), \ZZ)$.
 \end{defin}
 
 By definition $r$ is the intersection number of the 
  subset $ \PP^1(\RR) \times \{y\}
 \subset \PP^1(\RR) \times \PP^1(\RR)$ with the 
 graph of the restriction
 $f_r$ of $f$ to $\PP^1(\RR)$.
 
 It counts the solutions $x$ of the equation $ f(x) = y, \ x \in \PP^1(\RR)$ with sign and multiplicity.
 We shall later see that for $\mathfrak H, \mathfrak C$  one has $ r=-1$.
 
 We have an easy lemma
 
 \begin{lemma}\label{sign-fixpoints}
 (i) The number of real fixpoints (that is, the fixpoints in $\PP^1(\RR)$) of a real rational
 function $f$, counted with sign and multiplicity, equals $ r-1$, where $r$ is the real degree of $f$.
 
  (ii) The multiplicity of a fixpoint $P$ is calculated as follows, in terms of   a local coordinate $z$ for which $z(P)=0$: consider the Taylor development of $f$ at $P$,
$$ f(z) = a z + b z^h + {\rm higher \ order \ terms},$$
 where $ b \neq 0$ and $h >1$. 
 
 Then the multiplicity (with sign) as a fixed point equals the sign of  $(a-1) \in \RR$ for $a \neq 1$,
 
 otherwise it  equals zero for $h$  even, and for $h$ odd it is the sign of  $b \in \RR$.

 \end{lemma}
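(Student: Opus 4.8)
The plan is to mimic the proof of Proposition \ref{P:fixpts}, transporting it from $\PP^1(\CC)$ and $\PP^1(\CC)\times\PP^1(\CC)$ to their real loci, which are topologically a circle $S^1$ and a $2$-torus $T^2=S^1\times S^1$. I would first establish (ii), which pins down the local contribution of each real fixpoint, and then deduce (i) by a global homological computation on $\PP^1(\RR)\times\PP^1(\RR)$. Throughout, one assumes $f$ is not the identity, so that the real fixpoint set is finite, as is implicit in the statement.

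For (ii), fix a real fixpoint $P$, a real local coordinate $z$ centred at $P$, and let $w$ be the corresponding coordinate on the second factor, so that near $(P,P)$ the diagonal $\Delta$ is $\{w=z\}$ and the graph $\Gamma_f$ of $f|_{\PP^1(\RR)}$ is $\{w=f(z)\}$; both are smooth arcs through $(P,P)$, graphs over the $z$-axis, oriented by increasing $z$. I would show that the signed local intersection index of $\Delta$ and $\Gamma_f$ at $(P,P)$ equals the local degree at $0$ of the displacement function $g(z):=f(z)-z=(a-1)z+bz^h+\cdots$, i.e. $\tfrac12\bigl(\operatorname{sign}g(\varepsilon)-\operatorname{sign}g(-\varepsilon)\bigr)$ for small $\varepsilon>0$; in the transverse case $a\neq1$ this is just $\operatorname{sign}\det\left(\begin{smallmatrix}1&1\\1&f'(0)\end{smallmatrix}\right)=\operatorname{sign}(a-1)$, once the surface is oriented as ``first factor, then second factor''. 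Evaluating this local degree gives the asserted rule: if $a\neq1$ then $g$ has a simple zero and the index is $\operatorname{sign}(a-1)$; if $a=1$, then $g(z)=bz^h(1+o(1))$ with $b\neq0$ and $h>1$, so $g$ keeps its sign when $h$ is even (index $0$) and flips sign like $bz^h$ when $h$ is odd (index $\operatorname{sign}(b)$). One may also remark, as a consistency check with Proposition \ref{P:fixpts}(ii), that the real index is bounded in absolute value by the complex fixpoint multiplicity.

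For (i), the point is that $\Delta\cap\Gamma_f$ is finite, since $\PP^1(\RR)$ is compact and $f\neq\mathrm{id}$, so the sum of the local indices from (ii) equals the homological intersection number $[\Delta]\cdot[\Gamma_f]$ in $H_*(\PP^1(\RR)\times\PP^1(\RR),\ZZ)=H_*(T^2,\ZZ)$, exactly as in the proof of Proposition \ref{P:fixpts}(i). Then $H_1(T^2,\ZZ)$ is freely generated by $A:=[\PP^1(\RR)\times\{y\}]$ and $B:=[\{x\}\times\PP^1(\RR)]$ with $A\cdot A=B\cdot B=0$ and, for the orientation above, $A\cdot B=1$; since $\Gamma_f$ maps isomorphically onto the first factor and winds $r$ times around the second, the definition of the real degree $r$ gives $[\Gamma_f]=A+rB$, while $[\Delta]=A+B$, whence $[\Delta]\cdot[\Gamma_f]=(A+B)\cdot(A+rB)=(r-1)(A\cdot B)=r-1$.

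The computations involved are elementary; the one genuinely fiddly point — as always with real, rather than complex, enumerative statements — is the orientation bookkeeping, i.e. verifying that a single orientation of $\PP^1(\RR)\times\PP^1(\RR)$ simultaneously makes the local formula of (ii) come out with a plus sign in front of $\operatorname{sign}$ and the global count of (i) come out as $r-1$ (and not $1-r$). Everything else reduces to the standard identification of a signed count of isolated intersections on an oriented surface with the homological intersection pairing, mirroring the proof of Proposition \ref{P:fixpts}.
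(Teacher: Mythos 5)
Your proposal is correct and follows essentially the same route as the paper: part (i) is the intersection of the classes of type $(1,1)$ (diagonal) and $(1,r)$ (graph) in $\PP^1(\RR)\times\PP^1(\RR)$, giving $r-1$, and part (ii) reduces to the signed local degree of the displacement $f(z)-z=(a-1)z+bz^h+\cdots$, analysed by its sign change (monotonicity) exactly as in the paper. Your version merely spells out the orientation and $H_1$ bookkeeping on the torus a bit more explicitly.
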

 \begin{proof}
 (i) The number of fixpoints equals the intersection number of  the diagonal of 
 $ \PP^1(\RR) \times \PP^1(\RR)$ with the graph of the restriction
 $f_r$ of $f$ to $\PP^1(\RR)$. The two classes are of respective type $(1,1) $,
 $(1,r) $, 
 hence their intersection number equals $ r-1$.
 
 (ii) is proven as in Proposition \ref{P:fixpts}: we reduce to calculate the 
  signed multiplicity of  $ f(z) -z = (a -1) z + b z^h  + h.o.t $ at $P$.
  
  For  $a>1$ the function $f(z)-z$ is increasing, for  $a<1$ the function $f(z)-z$ is decreasing. For
   $a=1$, and $h$ even, we obtain  $bz^h +h.o.t. $, which is homotopic to  $ b z^h -
  \e \frac{b}{|b|}$
  and has no zeros. For $h$ odd, the function is increasing or decreasing
  according to the sign of  $b$.
 
 \end{proof}
 
 Clearly also the real degree is multiplicative for compositions of real rational maps.
 
 Hence we obtain, with almost identical proof,  a similar result to Proposition \ref{P:fixpts}:
 
 \begin{proposition}
  Let $f: \PP^1 \ra \PP^1$ be a real rational  map of real degree $r$. 
  
  If $ |r|  >1$ the cardinality of the set $Per (f)(\RR)$ of  real periodic points of $f$, 
 $$Per (f)(\RR): = \{ P \in \PP^1(\RR) | \exists n \geq 1, f^{\circ n} (P) = P\},$$
  is infinite.
 
 \end{proposition}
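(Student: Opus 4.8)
The plan is to copy, essentially verbatim, the argument of Proposition~\ref{P:fixpts}(iii), with the multiplicity count of Proposition~\ref{P:fixpts}(ii) replaced by the \emph{signed} multiplicity count of Lemma~\ref{sign-fixpoints}, and exploiting that the real degree is multiplicative under composition. Write $\rho$ for the real degree of $f$, so $|\rho|>1$; since a degree-one map has real degree $\pm1$, this also forces $\deg f\ge 2$, hence every iterate of $f$ has degree $>1$.

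First I would argue by contradiction: suppose $Per(f)(\RR)=\{P_1,\dots,P_k\}$ is finite, with $P_i$ of period $n_i$. Set $n:=\mathrm{lcm}(n_1,\dots,n_k)$ and $F:=f^{\circ n}$. Then each $P_i$ is fixed by $F$ and by every iterate $F^{\circ s}$, while conversely any real fixpoint of $F^{\circ s}=f^{\circ ns}$ is a real periodic point of $f$, hence one of the $P_i$. So for every $s\ge 1$ the real fixpoints of $F^{\circ s}$ are exactly $P_1,\dots,P_k$, and Lemma~\ref{sign-fixpoints}(i), together with the fact that $F^{\circ s}$ has real degree $\rho^{ns}$, gives
\[
\sum_{i=1}^{k} m_i(s) = \rho^{ns}-1 ,
\]
where $m_i(s)\in\{-1,0,1\}$ denotes the signed multiplicity of $P_i$ as a fixpoint of $F^{\circ s}$.

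The key step is to check that $m_i(s)$ is eventually independent of $s$; concretely, that it is constant along even $s$. Fixing $i$ and a local real-analytic coordinate $z$ at $P_i$, write $F(z)=az+\cdots$ with $a=F'(P_i)\in\RR$ (and note $F^{\circ s}$ is never the identity near $P_i$). The cases to run through are: $|a|<1$, where $|a^s|<1$ gives $m_i(s)=-1$ for all $s$ by Lemma~\ref{sign-fixpoints}(ii); $a>1$, where $a^s>1$ gives $m_i(s)=+1$ for all $s$; $a<-1$, where $a^s>1$ for $s$ even gives $m_i(s)=+1$ for all even $s$; $a=1$, where $F(z)=z+bz^h+\cdots$ ($b\ne 0$, $h>1$) yields $F^{\circ s}(z)=z+sbz^h+\cdots$, so $m_i(s)$ is $0$ for $h$ even and $\mathrm{sign}(b)$ for $h$ odd, independently of $s$; and finally $a=-1$ --- the only real root of unity other than $1$ --- where $(F^{\circ2})'(P_i)=1$ reduces us, along even $s=2t$, to the previous case applied to $F^{\circ2}$, again giving $m_i(2t)$ constant in $t\ge1$. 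Granting this, for all even $s\ge2$ the left-hand side of the displayed identity is a fixed integer of absolute value $\le k$, while the right-hand side $\rho^{ns}-1$ has absolute value $\ge |\rho|^{ns}-1\to\infty$ as $s\to\infty$ because $|\rho|>1$; this contradiction proves the proposition.

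I expect the only genuine obstacle, compared with the complex case of Proposition~\ref{P:fixpts}(iii), to be the bookkeeping with signs: over $\RR$ the local multiplicity of a fixpoint may be $0$ or negative, so passing to iterates does not automatically enlarge the total, and one must verify that the signed local multiplicities stabilize along a suitable subsequence of iterates. The obstruction to stabilization along all iterates is a fixpoint with negative multiplier $a_i<0$ (in particular $a_i=-1$); restricting to even iterates removes it, and everything else is the same diagonal-intersection count already used above.
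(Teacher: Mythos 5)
Your proof is correct and follows exactly the route the paper intends: the paper gives no separate argument, saying only that the result follows ``with almost identical proof'' to Proposition \ref{P:fixpts}(iii), using the multiplicativity of the real degree and the signed fixpoint count of Lemma \ref{sign-fixpoints}. Your write-up supplies precisely those details, the only real-specific point being the restriction to even iterates to stabilize the signed multiplicities at fixpoints with negative multiplier, which is the correct real analogue of the root-of-unity case in the complex argument.
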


 \begin{remark}
 We shall later see that, for the map $\mathfrak C$, there are 4 fixpoints, 
 for two of them  $a=0$, for the other two  $a=\sqrt{3}, - \sqrt{3}$.
 Hence the number of real fixpoints counted with sign as explained in  Lemma \ref{sign-fixpoints} equals $-3+1 = -2$
confirming that $r=-1$.

The interesting fact is that we shall prove in Theorem \ref{D-periodic} that these 4 fixpoints are
all the real periodic points for $\mathfrak C$.
 
 \end{remark}

 \section{The semigroup generated by $ \mathfrak H,  \mathfrak C$
 and geometrically special elliptic curves}
 
 An easy but important observation, in order to study the semigroup $\sW(\frak H, \frak C)$,
  is moreover that $\infty =(0,1)$ is  a fixed point for both  $\mathfrak H$ and $\mathfrak C$.
 
 The other fixpoints for $\mathfrak H$ are the roots of 
 $$ (2 \la)^3 = -1,$$
 hence the four fixpoints of $\mathfrak H$ correspond to the four triangles in the Hesse pencil.
 
 Moreover $\la = - 1/2$  and $\infty$ are     the only real fixed points  of $\frak H$.
 
On the other hand the  other fixpoints for $\mathfrak C$ are the roots of 
 \begin{equation}\label{E:fixptsC}
  4 \la^3 + 6 \la^2 - 1 =0
   \end{equation}
 an equation which is solved by  the three real roots  $-\frac{1}{2}, \ -\frac{1}{2}\pm \frac{\sqrt{3}}{2}$.

We  observe here the  remarkable coincidence that the real fixpoints of  $\mathfrak H^{\circ 2}$ are the fixpoints
of $\mathfrak C$, and  that, as we shall calculate,  the fixpoints of $\mathfrak H$ are the ramification points of $\mathfrak C$,  namely, the points in the set $\sT$ of triangles.

In the sequel we shall also use the property, already observed in \cite{Art-Dolg},
that both maps $\mathfrak H$, $\mathfrak C$ are {\bf critically finite}, that is,
for each of them  the {\bf Postcritical set},  defined as  the forward orbit of the set of ramification  points, is a finite set.

\begin{remark}\label{crit-fin}
(i) The set of ramification points of $\mathfrak C$ is the set $\sT$ of triangles,
and  $\mathfrak C $ fixes $\infty, -\frac{1}{2}$, while it exchanges the other two: hence the
Postcritical set  of $\mathfrak C$  is equal to $\sT$.

(ii) The set of ramification points of $\mathfrak H$ is instead the set $\sF$ of Fermat cubics
which is sent (since $\mathfrak H (0) = \infty$) to the set $\sT$, which is the set of fixed points. Hence the Postcritical set  of $\mathfrak H$ equals $\sF \cup \sT$.
\end{remark}

 Observe that both maps  $\mathfrak H$ and  $\mathfrak C$ 
  are expressed by rational functions with integral coefficients, hence they  send the real line $\PP^1(\RR)$ to itself ( and also $\PP^1(\QQ)$ to itself)
 and that the preimages of $\infty$ are for $\mathfrak H$ the point $\la=0$ with multiplicity two
 and $\infty$ with multiplicity one, respectively for $\mathfrak C$ the same two points with exchanged multiplicity (this observation shows immediately that $ \mathfrak H $ and $ \mathfrak C$ do not commute, since  both compositions send $0$ to $\infty$, but one with multiplicity 4, the other with multiplicity 1).
 
 Take now a local coordinate around $\infty$, $u : = 1/\la$.

Then the respective Taylor expansions in this local coordinate are: 
\begin{equation}\label{H-infty}  u \mapsto   1/ \mathfrak H (1/u)  = - 3 u( 1 + 1/2 (u^3))^{-1},\end{equation}
\begin{equation}\label{C-infty} u \mapsto     1/  \mathfrak C (1/u)   = - \frac{3}{2} u^2  ( 1 - 1/4 (u^3))^{-1}.\end{equation}

 These local calculations allow us to give another proof  that $ \mathfrak H $ and $ \mathfrak C$ do not commute, since by composition we get:

 $$ \mathfrak H  \circ \mathfrak C:  u \mapsto (- \frac{3}{2} u^2 + o (|u|^2)  ) \mapsto  \frac{9}{2} u^2 + o (|u|^2),$$
 respectively 
 $$ \mathfrak C  \circ \mathfrak H:  u \mapsto (- 3 u + o (|u|)   \mapsto  - \frac{27}{2} u^2 + o (|u|^2).$$

 The previous calculation allows us to   prove the following Proposition,
 a first step in guessing the following
 
 \begin{theorem}\label{free-semigroup}
The   homomorphism $\psi$ is injective,  that is, $ \mathfrak H $ and $ \mathfrak C$
 generate a free rank 2 semigroup.
 \end{theorem}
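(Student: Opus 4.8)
The plan is to show that the homomorphism $\psi:\sW(h,c)\to \End(\PP^1)$ is injective by distinguishing any two distinct words through the local behaviour of the corresponding maps at the common fixed point $\infty$. The key observation, already recorded in the excerpt, is that $\infty$ is fixed by both $\mathfrak H$ and $\mathfrak C$, and that in the local coordinate $u=1/\la$ one has
$$ \mathfrak H: u\mapsto -3u+\text{(higher order)}, \qquad \mathfrak C: u\mapsto -\tfrac32 u^2+\text{(higher order)}.$$
Thus $\mathfrak H$ is a local biholomorphism at $\infty$ with multiplier $-3$, while $\mathfrak C$ vanishes to order $2$ there. For any word $W=s_e\cdots s_1$ in the letters $h,c$ (read so that $\psi(W)=\psi(s_e)\circ\cdots\circ\psi(s_1)$), the composite map $f=\psi(W)$ again fixes $\infty$, and by the chain rule for vanishing orders the local order of vanishing of $f$ at $\infty$ is the product $\prod 2^{[s_i=c]} = 2^{(\#\,c\text{'s in }W)}$. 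This already recovers the length function only partially; to separate words of the same length one must track finer data.

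First I would make precise the bookkeeping: for a map $g$ fixing $\infty$ with $g(u)=\alpha u^k(1+O(u^{?}))$ near $u=0$, composition $g_2\circ g_1$ with $g_i(u)=\alpha_i u^{k_i}(1+\cdots)$ gives leading term $\alpha_2\alpha_1^{k_2}u^{k_1 k_2}$. So to each word $W$ attach the pair $(k(W),\alpha(W))$ where $k(W)=2^{\#c}$ is the vanishing order and $\alpha(W)$ is the leading coefficient of $\psi(W)$ at $\infty$. From the two base expansions, reading the word right to left, one gets a completely explicit recursion: appending $h$ on the left sends $(k,\alpha)\mapsto(k,(-3)\,\alpha)$ — wait, more carefully $(k,\alpha)\mapsto(k,\,(-3)\alpha)$ only if the outer map is linear; appending $c$ sends $(k,\alpha)\mapsto(2k,\,(-\tfrac32)\alpha^{2})$. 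Since $k(W)$ already determines $\#c$, and for words with the same multiset of letters the value of $\alpha(W)$ is a specific monomial in $-3$ and $-\tfrac32$ with exponents determined by the \emph{positions} of the $c$'s (each $c$ squares whatever coefficient has been accumulated to its right), I would argue that two words with the same number of $c$'s but different placement of the $c$'s produce different $\alpha(W)$; combined with the vanishing-order invariant this separates all pairs except possibly words that are "$c$-free", i.e. pure powers of $h$, which are trivially distinguished by length alone.

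The step I expect to be the main obstacle is the last one: proving that the pair $(k(W),\alpha(W))$ is a \emph{complete} invariant, i.e. that the monomial $\alpha(W)$ in $-3$ and $-3/2$ genuinely depends on the word and not merely on its letter-count. Concretely, writing $W=s_e\cdots s_1$, if $s_1=h^{a_0}c\,h^{a_1}c\cdots c\,h^{a_r}$ with $r=\#c$, then tracking the recursion shows $\alpha(W)$ has the shape $(-3)^{A}(-3/2)^{2^{r}-1}\cdot(\text{sign})$ where $A=\sum_{j=0}^{r} 2^{j} a_j$ is a "base-$2$ reading" of the gap sizes — this $A$, together with $r$, recovers the tuple $(a_0,\dots,a_r)$ uniquely (a carry-free base-$2$ expansion argument, since each $a_j\ge 0$ but can exceed $1$, so one must be slightly careful and instead use that the full expansion of $\psi(W)$ at $\infty$, not just its leading term, is needed — or, more robustly, use the sub-leading coefficient coming from the $(1+\tfrac12 u^3)^{-1}$ and $(1-\tfrac14 u^3)^{-1}$ factors to pin down the $a_j$ one at a time). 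If this purely combinatorial extraction turns out delicate, the fallback is to not insist on the leading term alone but to compare the two rational functions $\psi(W_1),\psi(W_2)$ directly on a small cluster of test points (e.g. the postcritical sets $\sF,\sT$, which are permuted in word-dependent ways by Remark \ref{crit-fin}), or to run an induction on word length peeling off the \emph{last} applied letter using the distinct ramification data: $\psi(W)$ has its last symbol equal to $c$ iff its critical set is $\sT$-related in the way dictated by $\mathfrak C$, and equal to $h$ iff $\sF$-related, which lets one strip letters one at a time. I would present the local-expansion argument as the main line and mention the postcritical/ramification argument as the conceptual reason it works.
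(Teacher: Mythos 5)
Your main line has a genuine gap: the invariant you propose is provably incomplete. For a word in normal form $h^{a_0} c\, h^{a_1} \cdots c\, h^{a_r}$ the order of $\psi(W)$ at $\infty$ is $2^{r}$ and the leading coefficient is, up to sign, $(3/2)^{N(r)}\,3^{A}$ with $A=\sum_j 2^j a_j$ and $N(r)$ depending only on $r$; so your pair $(k(W),\alpha(W))$, together with the degree, amounts to knowing $r$, $e(h)=\sum_j a_j$ and $A$. Because the $a_j$ may exceed $1$, these data do not determine $(a_0,\dots,a_r)$: the paper's own counterexample is $chchhc$, with $(a_0,a_1,a_2,a_3)=(0,1,2,0)$, versus $hhccch$, with $(2,0,0,1)$ --- both have $r=3$, $e(h)=3$ and $A=2+8=1+1+8=10$, hence leading coefficients of the same absolute value; the expansion at the other common fixed point $-\tfrac12$ adds nothing, and the residual sign is a single bit which cannot separate the unboundedly many tuples sharing $(r,e(h),A)$ for longer words. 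So the ``carry-free base-$2$'' step, which you yourself flag as the main obstacle, is exactly where the argument fails, and the hedge of ``using sub-leading coefficients'' is neither carried out nor routine, since the higher-order terms mix all the $a_j$.

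Your fallback of peeling off the last applied letter is the right strategy --- it is essentially what the paper does --- but the detector you propose does not work as stated: since $\mathrm{Crit}(g\circ f)=\mathrm{Crit}(f)\cup f^{-1}(\mathrm{Crit}(g))$ and the points of $\sT$ are fixed by $\mathfrak H$, already $\mathrm{Crit}(\mathfrak C\circ\mathfrak H)\supset \sF\cup\sT$, and in general the critical set of a composite word contains points of both $\sF$- and $\sT$-type, so ``critical set $\sT$-related versus $\sF$-related'' does not read off the first applied letter. The paper replaces this by a sharper arithmetic lemma: rewriting words in the alphabet $h,i$ via $\mathfrak C=\mathfrak H\circ\iota$, it shows that the set $S$ of numbers $q\cdot 2^{b/3}$ (with $q$ a ratio of odd coprime integers and $3\nmid b$) is carried into $S\cup\{0\}$ by $\iota$ and $\mathfrak H$, and deduces that $\psi(w)(-2^{-1/3})\in\{0,\infty\}$ exactly when the word $w$ ends with $h$; this identifies the letter applied first, and the induction then closes because $\mathfrak H$ is surjective, hence can be cancelled. (Alternatively, the paper obtains the needed cancellation from a Ritt-type analysis of the curve $\mathfrak H(x)=\mathfrak C(y)$, whose only rational component is the graph of $\iota$, so that $\mathfrak H\circ f=\mathfrak C\circ F$ forces $F=\iota\circ f$.) Some lemma of this kind is indispensable; the local expansions at the common fixed points alone cannot separate the semigroup.
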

 
 \begin{proposition}\label{free}
 Consider a word $W(h,c) $ of exponent $e$, and with exponents $e(c)$ and $e(h)$
 of the respective letters $c,h$. 
 
 Then $f : = \psi (W(h,c) ) = W (\frak H, \frak C)$ determines the exponents 
 $e(c)$ and $e(h)$ (and their sum $e$).
 
 Moreover, if we write such a word in the normal form 
 $$ h ^{a_0} c \ h ^{a_1}   \  \dots  c \ h ^{a_j}   \dots c\ h ^{a_{e(c)}}, \ a_i \geq 0, \ \sum_i a_i = e(h),$$
 then the Taylor expansion at $\infty$ has order  $2^{e(c)}$,
 and leading coefficient equal to 
 $$ \pm (\frac{3}{2})^{ 2^{e(c) -1} } (3)^ { \sum_i 2^i a_i}.$$
 \end{proposition}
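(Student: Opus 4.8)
The plan is to extract everything from the behaviour of $f$ at the common fixed point $\infty$, via the two local expansions \eqref{H-infty} and \eqref{C-infty}. Since $\mathfrak H$ and $\mathfrak C$ have degree $3$ and degree is multiplicative under composition, $f=\psi(W(h,c))$ has degree $3^{e}$, so $f$ already recovers $e=e(h)+e(c)$ (equivalently, by Proposition \ref{P:fixpts}, through its number $3^{e}+1$ of fixed points). To separate $e(h)$ from $e(c)$ it then suffices to recover $e(c)$, and the invariant that does this is the order of vanishing of $f$ at $\infty$ in the local coordinate $u:=1/\lambda$; I will show it equals $2^{e(c)}$.

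The technical core is an elementary lemma on composing holomorphic germs fixing $0$: if $g(u)=\alpha u^{p}+(\text{h.o.t.})$ and $\tilde g(u)=\beta u^{q}+(\text{h.o.t.})$ with $\alpha,\beta\ne0$, $p,q\ge1$, then $g(\tilde g(u))=\alpha\beta^{p}u^{pq}+(\text{h.o.t.})$; iterating, for a composite $g$ of such germs $g_{1},\dots,g_{n}$ the order is the product of the orders of the factors, and the leading coefficient is $\prod_{k}\operatorname{lc}(g_{k})^{D_{k}}$, where $D_{k}$ is the order of the composite of all factors applied \emph{after} $g_{k}$. Since expressing a map in the coordinate $u=1/\lambda$ amounts to conjugating it by the involution $\lambda\mapsto 1/\lambda$, and conjugation respects composition, I may feed into this lemma the germs at $\infty$ of $\mathfrak H$ and $\mathfrak C$, which by \eqref{H-infty} and \eqref{C-infty} are holomorphic and nondegenerate at $0$ with $(\operatorname{ord},\operatorname{lc})=(1,-3)$ for $\mathfrak H$ and $(2,-\tfrac{3}{2})$ for $\mathfrak C$; hence $\mathfrak H^{\circ a}$ has $(\operatorname{ord},\operatorname{lc})=(1,(-3)^{a})$.

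Writing $f$ through the normal form as $\mathfrak H^{\circ a_{0}}\circ\mathfrak C\circ\mathfrak H^{\circ a_{1}}\circ\cdots\circ\mathfrak C\circ\mathfrak H^{\circ a_{e(c)}}$, a composite of $e(c)+1$ powers of $\mathfrak H$ (each of order $1$) interleaved with $e(c)$ copies of $\mathfrak C$ (each of order $2$), the order at $\infty$ equals $2^{e(c)}$; this recovers $e(c)$, hence $e(h)=e-e(c)$, settling the first assertion and the order statement. For the leading coefficient I read off each $D_{k}$: the factor $\mathfrak H^{\circ a_{i}}$ is preceded in this composite by $i$ copies of $\mathfrak C$ (plus powers of $\mathfrak H$, of order $1$), so $D=2^{i}$ and it contributes $\bigl((-3)^{a_{i}}\bigr)^{2^{i}}=(-3)^{a_{i}2^{i}}$; the $k$-th copy of $\mathfrak C$ from the left is preceded by $k-1$ copies of $\mathfrak C$ (plus powers of $\mathfrak H$), so $D=2^{k-1}$ and it contributes $(-\tfrac{3}{2})^{2^{k-1}}$. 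Multiplying and summing the geometric series $\sum_{k=1}^{e(c)}2^{k-1}=2^{e(c)}-1$ gives
\[
\prod_{i=0}^{e(c)}(-3)^{a_{i}2^{i}}\cdot\prod_{k=1}^{e(c)}\bigl(-\tfrac{3}{2}\bigr)^{2^{k-1}}=\pm\,3^{\sum_{i}2^{i}a_{i}}\bigl(\tfrac{3}{2}\bigr)^{2^{e(c)}-1},
\]
the asserted leading coefficient (the exponent of $\tfrac32$ being the geometric sum just displayed, whose last term is $2^{e(c)-1}$).

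I expect the only delicate point to be bookkeeping, not anything conceptual: one must fix the convention by which $\psi$ turns concatenation into composition, so that ``the factors applied after a given factor'' is unambiguous, and then count for each factor how many copies of $\mathfrak C$ lie to its left --- this is exactly where the binary weights $2^{i}$, $2^{k-1}$ enter. The remaining ingredients (multiplicativity of degree, the composition lemma, holomorphy at $\infty$) are routine; and this Proposition is precisely the first step towards Theorem \ref{free-semigroup}, which will need the sharper statement that $f$ recovers each $a_{i}$, not merely their number and their sum.
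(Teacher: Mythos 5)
Your proof is correct and follows essentially the same route as the paper: multiplicativity of the degree recovers $e$, and the order of vanishing at the common fixed point $\infty$, computed from the local expansions \eqref{H-infty} and \eqref{C-infty} together with the fact that orders multiply under composition, recovers $e(c)$ and hence $e(h)=e-e(c)$. You moreover carry out explicitly the leading-coefficient bookkeeping that the paper's proof leaves implicit, and your exponent $2^{e(c)}-1$ for $\tfrac32$ (rather than the printed $2^{e(c)-1}$) is the correct one: for $\mathfrak C\circ\mathfrak C$ the leading coefficient at $\infty$ is $-\tfrac{27}{8}=-\bigl(\tfrac32\bigr)^{3}$, and the paper's own subsequent remark, which assigns the value $\bigl(\tfrac32\bigr)^{7}(3)^{10}$ to words with $e(c)=3$, agrees with your formula, so the exponent appearing in the statement of the Proposition is a typo, not a gap in your argument.
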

 
 \begin{proof}
 We have already observed that the degree of  $f : = \psi (W(h,c) )$ equals  $3^e$, where $e$ is the exponent (that is, the length) of the
word $W(h,c) $.  Hence the exponent $e$ is determined by $f$.

On the other hand, the order of $f$ at the fixed point $\infty$ equals $2^{e(c)}$,
where $e(c)$ is  the exponent of
$c$ in the word $W(h,c) $. 

Therefore $f$ determines the exponents $e$ and $e(c)$, hence also $e(h) = e - e(c)$.

Given  $e(h) = e - e(c) $ and $e(c)$, all the words with these exponents can be written 
in the above normal form.

\bigskip

\bigskip

 \end{proof}
 
 The previous Proposition is not yet sufficient to show the injectivity of $\psi$, as the following example shows.

 \begin{remark}
 (i) The two following words have leading coefficients of the same absolute value
 $ (\frac{3}{2})^7 (3)^{10}$:
 $$ chchhc, \ hhccch,$$
 since $ 10= 2 + 4 + 4 = 1 + 1 + 8$
 (respectively $a_0=0, a_1=1, a_2=2, a_3=0$ for the first word,
 $a_0=2, a_1=a_2=0, a_3=1$ for the second word).
 
 (ii) Looking at the Taylor development at $- \frac{1}{2} $ does not give further information.
 
 (iii)  The absolute value of the first coefficient becomes strictly larger once we exchange a product $\mathfrak H  \circ \mathfrak C$
with a product $\mathfrak C  \circ \mathfrak H$.
Therefore the absolute value  of the coefficients is strictly monotone increasing as we 
run  from 
$\mathfrak H ^{e - e(d)}  \circ \mathfrak C^{e(d)}$ until 
$ \mathfrak C^{e(d)}  \circ \mathfrak H ^{e - e(d)} $.
The problem is that we do not get all the possible words via a linear sequence of these procedures. 
 \end{remark}
 
 Therefore we change our strategy and prove a slightly stronger result. 
 
  \begin{remark}
  More generally, in view of the identities 
  $$\iota^{\circ2} = Identity, \ \frak C = \frak H \circ \iota , \  \frak H = \frak C \circ \iota$$
  we can consider the semigroup $\sW'$ of positive words in
$\ZZ * (\ZZ/2) $, which we view (see for instance \cite{derham})  as 
the set of reduced words in  $h , i$ for the relation $i^2 \equiv  \emptyset$
(here $\emptyset = $ emptyword)
or, similarly, we  consider the semigroup $\sW''$ of 
the reduced words in  $c , i$ modulo the relation $i^2= \emptyset$ .

Now,  the relation $c = h i $ yields a 
 bijection between 
the free semigroup $\sW (c,h) $
and the subset of elements of $\sW' $ which do not begin with $i$,
and similarly a bijection with  the subset of elements of $\sW''$ which do not begin with $i$.

\end{remark}

 \begin{theorem}\label{free-semigroup-2}
The homomorphisms 

$$\psi : \sW'  \ra End(\PP^1) , \ \sW'' \ra End(\PP^1),$$

defined by 
$$  \psi (i) = \iota , \psi (h) = \frak H, \psi (c) = \frak C,$$
( recall that  $  \iota (\la) =  -1/(2\la)$)
are  injective. 

In particular Theorem \ref{free-semigroup} holds true.
\end{theorem}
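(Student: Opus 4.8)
The plan is to work with the semigroup $\sW'$ of reduced words in $h,i$ (modulo $i^2\equiv\emptyset$) that do not begin with $i$; by the preceding remark this is in bijection with the free semigroup $\sW(h,c)$ via $c=hi$, so injectivity of $\psi$ on $\sW'$ yields Theorem \ref{free-semigroup}. The key geometric input is that $\iota$, $\mathfrak H$ and $\mathfrak C$ all fix $\infty$, and from the local expansions \eqref{H-infty}, \eqref{C-infty} together with $\iota(\la)=-1/(2\la)$, i.e.\ $u\mapsto -2u$ in the coordinate $u=1/\la$, we can read off, for any word $W$, both the \emph{order} of vanishing of $\psi(W)$ at $\infty$ and its \emph{leading coefficient}. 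Concretely, each occurrence of $h$ multiplies the order by $1$ and scales the coordinate by a factor whose absolute value is exactly $3$ (from \eqref{H-infty}), each occurrence of $i$ multiplies the order by $1$ and scales by $-2$, so the order at $\infty$ is $1$ for words built only from $h,i$. This degenerate order means I must instead track the full leading coefficient as a function of the word, and separately use the degree ($3^{e(h)}$ for a word with $e(h)$ letters $h$, since $\iota$ has degree $1$) to recover $e(h)$ from $\psi(W)$.

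First I would fix the coordinate $u=1/\la$ at $\infty$ and compute, by induction on word length, the leading term of $\psi(W)(u)$ for $W\in\sW'$. Writing $W$ in its normal form as an alternating product of blocks $h^{a}$ and $i$, each block $h^a$ contributes a factor $(-3)^a$ to the leading coefficient (and leaves the order at $1$), while each $i$ contributes a factor $-2$; but crucially the higher-order corrections in \eqref{H-infty} (the term $1+\tfrac12 u^3$) and in $\iota$ interact, so that when I substitute one map into another the leading coefficient of the composite is \emph{not} simply the product — the subleading $u^3$ terms feed back. This is where the argument gets its teeth: I would track the leading coefficient together with the coefficient of the \emph{next} nonzero term (the $u^{1+3k}$ correction), so that the pair $(\text{leading coeff},\ \text{first correction})$ transforms predictably and lets me distinguish words that Proposition \ref{free} alone could not (the example $chchhc$ versus $hhccch$).

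The heart of the proof is a normal-form / cancellation argument in $\ZZ*(\ZZ/2)$: suppose $\psi(W)=\psi(W')$ with $W\ne W'$ reduced words not beginning with $i$. From the degree I get $e(h)(W)=e(h)(W')$, and from the order at $\infty$ (always $1$) nothing, so I compare the two words letter by letter from the \emph{left}, conjugating away common prefixes. Since $\mathfrak H$ and $\iota$ are invertible as germs at $\infty$ (both have nonzero derivative there), I can left-cancel matching initial blocks and reduce to the case where $W$ and $W'$ begin differently — one with $h$, the other with $i$ (they cannot both begin with $h$ after cancellation, nor with $i$ by hypothesis, so the mismatch is $W=h\cdots$, $W'=i\cdots$, or one is empty). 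Then I read the leading coefficient and first correction of the germ at $\infty$: a word beginning with $h$ after all further $h$'s and $i$'s still has the feature that its first correction term is controlled by the parity/position of the first $i$, whereas a word beginning with $i$ has a correction term with a different $2$-adic (or sign) signature, giving a contradiction. The same argument, mutatis mutandis with \eqref{C-infty} in place of \eqref{H-infty}, handles $\sW''$; since the order at $\infty$ there jumps by a genuine factor $2$ at each $c$, that case is in fact easier and could be used as a consistency check.

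The main obstacle I anticipate is the bookkeeping of how the subleading $u^{1+3k}$ corrections propagate under composition: one must verify that the pair (leading coefficient, first correction) — or perhaps a slightly larger finite jet at $\infty$ — is a \emph{complete} invariant that separates reduced words beginning differently. If a two-term jet is not enough (because of an accidental coincidence analogous to $2+4+4=1+1+8$), the fallback is to also expand at the second common fixed point $\la=-\tfrac12$, or to invoke that $\mathfrak H$ and $\mathfrak C$ are critically finite with \emph{distinct} postcritical sets ($\sF\cup\sT$ versus $\sT$, Remark \ref{crit-fin}) so that the postcritical set of $\psi(W)$ — which can be computed from the word as a union of forward orbits of $\sT$ — already sees the positions of the $h$'s; iterating this observation on suffixes of $W$ should pin down the word completely. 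Either way, reducing to the local analysis at the common fixed point $\infty$ and exploiting that $h,i$ act there by invertible germs with explicit leading behavior is the right framework, and Proposition \ref{free} is the base case.
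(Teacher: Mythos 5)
Your starting premise is already off: $\iota(\la)=-1/(2\la)$ does \emph{not} fix $\infty$ — it exchanges $0$ and $\infty$ (this exchange is precisely what the paper exploits). Hence the bookkeeping ``each $i$ scales the germ at $\infty$ by $-2$ and the order at $\infty$ is always $1$'' is not well defined for an arbitrary word in $h,i$: for instance $\psi(hi)=\frak H\circ\iota=\frak C$ has local order $2$ at $\infty$ by \eqref{C-infty}, exactly because the inner $\iota$ sends $\infty$ to $0$, where $\frak H$ has local degree $2$ (compare \eqref{H-infty}). So the two-term jet at ``the common fixed point $\infty$'' that you want to propagate through the word is not available in the form you describe, and you never establish that any finite jet is a complete invariant (you acknowledge this and offer fallbacks, but they are not worked out).

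The decisive gap, however, is the left-cancellation step. From a global identity $\frak H\circ f_1=\frak H\circ f_2$ you cannot conclude $f_1=f_2$ merely because $\frak H$ is locally invertible at $\infty$: $\frak H$ has degree $3$, and to pick a consistent branch you would first need $f_1$ and $f_2$ to agree at some point where $\frak H$ is locally injective; a priori $f_1(\infty)$ and $f_2(\infty)$ could be the two distinct points $0$ and $\infty$, both of which $\frak H$ sends to $\infty$, so the germs at $\infty$ do not give this. In the paper this left cancellation ($\frak H\circ F_1=\frak H\circ F_2\Rightarrow F_1=F_2$) is a genuinely global statement, obtained only in Theorem \ref{ritt} via the decomposition of the curve $\{\frak H(x)=\frak C(y)\}$ into the graph of $\iota$ and a smooth bidegree $(2,2)$ curve of genus one; it is then used in a \emph{second} proof of Theorem \ref{free-semigroup-2}. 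The paper's own (first) proof avoids left cancellation entirely: it detects the \emph{last} (innermost) letter of the word by an arithmetic invariant — the set $S$ of numbers $q\cdot 2^{b/3}$ with $3\nmid b$ and $q$ a ratio of odd coprime integers, for which one shows $\psi(w)(-2^{-1/3})\in\{0,\infty\}$ if and only if $w$ ends with $h$ — and then cancels on the right, which is trivial because $\iota$ is a bijection and $\frak H$ is onto. As it stands, your proposal neither supplies the global cancellation it relies on nor an invariant that separates words, so the argument does not go through.
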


\begin{proof}

The advantage here is that $\iota$ exchanges $0, \infty$ 
while $\frak C (0) =  \frak H (0) = \infty,$ and $ \infty$  is a fixed point 
of $\frak C, \frak H$.

Observe preliminarily that it is sufficient to establish the result for 
$\sW'$, since  $\sW'$ and $\sW''$ are isomorphic setting
$ c=hi \Leftrightarrow h = ci$, and $\psi (\sW') = \psi ( \sW'')$.

Hence from now on we shall only consider the semigroup $\sW'$.

Before continuing with the proof, we establish an   intermediate result
which could   also be useful for other purposes.

\begin{lemma}
Given the  set 
$$ S : = \{ q \cdot 2^{\frac{b}{3}}| b \in \ZZ \setminus 3 \ZZ , \ q \in \QQ \setminus \{0\}, q = \frac{\al}{\be} , \al \in \ZZ, \be \in \NN, \al, \be \  {\rm odd \ and  \ relatively \ prime} \}$$
we have that  
$$\iota (S) = S, \ \  \frak H (S) \subset S \cup \{0\}.$$

Moreover 

(i) For $\la \in S$,  $ \frak H (\la)=0$ if and only if $\la = - 2^{\frac{-1}{3}}$ 
(that is, $ q= - 1, b=-1$).

(ii) $ \iota (- 2^{\frac{-1}{3}}) =  2^{\frac{-2}{3}}$.

(iii) For $\la \in S$ 
$$\frak H (\la) \neq - 2^{\frac{-1}{3}}, 2^{\frac{-2}{3}}.$$

(iv) For $\la \in S$ and $ f =  \psi (W'(h,i))$, where $W'(h,i) \in \sW'$,
we have $ f ( - 2^{\frac{-1}{3}}) \in \{ 0, \infty\}$
if and only if the last letter of the word $W'(h,i)$ equals $h$.

\end{lemma}
\begin{proof}
First we verify that $S$ is invariant, in fact:
$$ \iota (  q \cdot 2^{\frac{b}{3}}) = - q ^{-1} 2^{\frac{-3-b}{3}} = - \frac{\be}{\al}2^{\frac{-3-b}{3}}.$$
$$ \frak H (  q \cdot 2^{\frac{b}{3}}) = - \frac{1 + q^3 2^{b+1}}{6 q^2 2^{\frac{2b}{3}}}=
- 2^{\frac{-3 - 2b}{3}} (\frac{\be^3 + 2^{b+1}  \al ^3 }{ 3 \al^2 \be  }).$$
Observe that $ q'' := \frac{\be^3 + 2^{b+1}  \al ^3 }{ 3 \al^2 \be  }$ is not yet of the desired form, but it can be written as 
$$q''= \frac{\al'}{\be'} 2^n = : q' 2^n $$ for some $n \in \ZZ$ and 
$\al', \be' $  odd, and relatively  prime integers, as follows:

$$ b \geq 0 \Rightarrow n=0, \ b \leq -2 \Rightarrow n= b+1 \leq -1.$$

If $ b = -1$ then $ n \geq 1$.

The conclusion is that we have the normal form:
$$ \frak H (  q \cdot 2^{\frac{b}{3}}) = 
- 2^{\frac{-3 - 2b + 3n}{3}} \ \frac{\al'}{\be'} = - q' \ 2^{\frac{-3 - 2b + 3n}{3}} .$$

(i) follows now from the fact that $\frak H (\la)=0$ if and only if $ 2 \la^3 + 1=0$.

(ii) follows right away from the previous formula.

(iii-I) Assume that $\frak H (\la) =  - 2^{\frac{-1}{3}}$.

Then we have, looking at the exponent of $2$:

$$ -1 = -3 - 2b + 3n = - 2b + 3 (n-1).$$
For $ b \geq 0$ we have $n=0$, hence this case is impossible.

For $ b \leq -2$ we have $n = b+1$, hence this case is also impossible,
( $-1= b$).

For $b=-1$, we have $ n \geq 1$, hence this case is also impossible,
since  otherwise we should  have $-3 = 3 (n-1) \geq 0$.

(iii-II) Assume instead that $\frak H (\la) =   2^{\frac{-2}{3}}$.
Then we should have
$$ -2 = -3 - 2b + 3n = - 2b + 3 (n-1).$$

For $ b \geq 0$ we have $n=0$, hence  this case is impossible.

For $b=-1$, we have $ n \geq 1$, hence this case is also impossible,
since  we must have $-4 = 3 (n-1) \geq 0$.

For $ b \leq -2$ we have $n = b+1$, hence we have 
 $b = -2$, $ n = -1$ and we also have
 $$   - 2 q''  = 1 \Leftrightarrow 2 \be^3 + \al^3 = - (3 \al^2 \be).$$
 
 Since $\al, \be$ are relatively prime odd integers, the formula
 implies that $\be$ divides $\al$, and $\al$ divides $\be$, which is a contradiction
 unless $  q = \pm  1$.  Hence  in this case we would have  $
 \be=1$, $\al = \pm 1$ and the above equation
yields a contradiction.

Hence (iii) is proven.

Let us show (iv). 

By (i) $\frak H (\la)=0$ for  $ \la = - 2^{\frac{-1}{3}}$.

Then we observe that  $\iota$ and $\frak H$ leave the set $\{0, \infty\}$
invariant. 

Conversely, if the last letter of the word $W'(h,i) \in \sW'$ yielding $f$
(that is, such that $ f =  \psi (W'(h,i))$)    is not equal to $h$, it must be $i$.

We want to derive a contradiction from the property that 
$ f (  - 2^{\frac{-1}{3}}) \in \{0, \infty\}$. For this purpose we may assume that the
word $W'(h,i)$ is minimal with this property, hence 
that  $f$  cannot be written as $f' \circ f''$, such that 
$f'' ( - 2^{\frac{-1}{3}}) \in \{0, \infty\}$.

Then there exists $F \in \sW' (\frak H, \iota)$ such that we can write $ f = \frak H \circ F \circ \iota$, and we 
observe again  that $\iota ( - 2^{\frac{-1}{3}}) = 2^{\frac{-2}{3}}$.

Hence, by (i),  
we have $ F  (  2^{\frac{-2}{3}}) =  - 2^{\frac{-1}{3}}$.

There are two cases: $ F = \iota \circ \frak H \circ \Psi$,
or $ F= \frak H  \circ \Psi$ (with $ \Psi \in \sW' (\frak H, \iota)$).

In the second  case $\frak H \circ \Psi (  2^{\frac{-2}{3}}) =- 2^{\frac{-1}{3}} $,
contradicting (iii), 
in the first  case 
$\frak H \circ \Psi (  2^{\frac{-2}{3}}) = 2^{\frac{-2}{3}} $
contradicting again (iii).

Hence also (iv) is proven.
 
\end{proof}

Let's finish now the proof of the Theorem.

We know that, by Proposition \ref{free}, the image $f = \psi (W'(h,i))$ of the word $W'(h,i)$ determines the exponents of the two letters $h,i$.

Hence we can prove the result by induction on the length of the letter.

We know from (iv) of the previous Lemma that the property whether the word $w$ ends or does not end 
with $i$ is a geometric property of $f : = \psi (w)$.

If therefore $\psi (w_1) = \psi(w_2)$, then  either they both end with $i$, or they both end with $h$.

In the first  case we  can write $ w_j = w'_j i$, and it follows that $\psi(w'_1) = \psi(w'_2)$,
and we conclude the proof by induction.

In the second case    we  can write $ w_j = w'_j h$, and we conclude again 
the proof by induction since again we must  have 
 $\psi(w'_1) = \psi(w'_2)$.

In fact, the cancellation $f_1 \circ \frak H = f_2 \circ \frak H \Rightarrow f_1 = f_2$
follows because $\frak H$ is onto.

 \end{proof}

 \begin{defin}
 We shall say that an elliptic curve $E$ in the Hesse pencil is {\bf geometrically special}
 if it is a fixpoint for some endomorphism $ f \in \sW (h,d)$.
 
 Similarly we shall say that a cubic curve $E$ is geometrically special if it is isomorphic to
 such a curve in the Hesse pencil.
 
  We shall say that an elliptic curve $E$ in the Hesse pencil is {\bf geometrically pre-special}
 if its orbit under the semigroup   $  \sW (h,d)$ contains a geometrically special elliptic curve.

 We shall say that such a cubic is {\bf ergodic} if it is not geometrically pre-special.
 \end{defin}
 
 In particular, the $\mathfrak C$- and $\mathfrak H$-periodic 
 cubics are geometrically special.
 
 \begin{proposition}
 The geometrically special and pre-special cubics in the Hesse pencil form a countable set,
 which is contained in the set of algebraic points $\la$ defined over $\QQ$.

In particular, the set of ergodic points is dense in the Hausdorff topology
 (by Baire's theorem).
 \end{proposition}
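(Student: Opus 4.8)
The proof is a countability argument combined with Baire's theorem; the fact that $\frak H$ and $\frak C$ are rational maps defined over $\QQ$ does all the work.

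First I would record that $\sW(\frak H, \frak C)$ is countable: being generated by the two elements $\frak H, \frak C$ (and, by Theorem \ref{free-semigroup}, in fact free on them), its elements are indexed by the countably many finite words in two letters. Next, for any $f = \psi(W(h,c)) \in \sW(\frak H, \frak C)$ of word-length $e \geq 1$ one has $\deg f = 3^e > 1$, so by Proposition \ref{P:fixpts}(i) the set $\Fix(f)$ is finite, of cardinality at most $3^e + 1$. Moreover $\frak H$ and $\frak C$ are represented by rational functions with coefficients in $\ZZ$, a property stable under composition, so $f = P/Q$ with $P, Q \in \ZZ[\la]$; then the equation $P(\la) - \la Q(\la) = 0$ has integer coefficients, so every finite fixpoint of $f$ is an algebraic number, while the fixpoint $\infty$ (when it occurs) is $\QQ$-rational. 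Thus $\Fix(f)$ is a finite set of points algebraic over $\QQ$. Since the set $\Sigma$ of geometrically special points equals $\bigcup_{f \in \sW(\frak H, \frak C)} \Fix(f)$, a countable union of such finite sets, $\Sigma$ is countable and consists of points algebraic over $\QQ$.

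For the pre-special points I would observe that a point $\la$ is geometrically pre-special exactly when $g(\la) \in \Sigma$ for some $g \in \sW(\frak H, \frak C)$; this includes the special points themselves, since if $f(\la) = \la$ then already $f(\la) \in \Sigma$, so $\Sigma$ is contained in the set $\Sigma' := \bigcup_{g \in \sW(\frak H, \frak C)} g^{-1}(\Sigma)$ of pre-special points. For each $g$ of degree $3^e$ and each $\mu \in \Sigma$ the fiber $g^{-1}(\mu)$ has at most $3^e$ elements; writing $g = A/B$ with $A, B \in \ZZ[\la]$ and $\mu \in \overline{\QQ} \cup \{\infty\}$, the defining equation of $g^{-1}(\mu)$ has coefficients in $\overline{\QQ}$, so $g^{-1}(\mu) \subset \PP^1(\overline{\QQ})$ and is finite. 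Hence $\Sigma'$, being the union over the countably many pairs $(g, \mu)$ of these finite sets, is a countable subset of $\PP^1(\overline{\QQ})$.

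Finally, the set of ergodic points is $\PP^1(\CC) \setminus \Sigma'$. Since $\PP^1(\CC)$ with the Hausdorff topology is a complete metric space with no isolated points, every singleton is nowhere dense, so the countable set $\Sigma'$ is of first category; by Baire's theorem its complement is residual, in particular dense, which is the assertion. There is no genuine obstacle in this argument: the only care needed is the bookkeeping with the point $\infty \in \PP^1$ — which is $\QQ$-rational and hence harmless for the algebraicity claim — and the remark that a geometrically special point is a fortiori pre-special, which lets the single formula $\Sigma' = \bigcup_g g^{-1}(\Sigma)$ account for both classes at once.
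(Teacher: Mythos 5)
Your argument is correct and follows essentially the same route as the paper: integrality of the coefficients of the maps in $\sW(\frak H,\frak C)$ makes all fixpoints and their counterimages algebraic over $\QQ$, a countable union of finite sets gives countability, and Baire's theorem yields density of the ergodic points. Your write-up merely makes explicit the bookkeeping (countability of the semigroup, the fibers $g^{-1}(\Sigma)$, the point $\infty$) that the paper leaves implicit.
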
 
 
 \begin{proof}
 All the rational functions $f$ in the semigroup $\sW(\frak H, \frak C)$ have
 integral coefficients. Hence their fixed points are algebraic numbers,
 as well as their counterimages, 
 hence they  are contained in a countable set. 
 
 Moreover, for each element $f$ in the semigroup the degree is at least $3$,
 hence the number of its periodic points is infinite.
 
 The rest is an obvious consequence of Baire's theorem.
 
 \end{proof}

 \subsection{Another  approach for showing that $\sW(\frak H, \frak C)$ is a free semigroup}

 Umberto Zannier suggested that   the methods 
 introduced by Ritt in \cite{ritt}  concerning compositions  of polynomial maps might yield a   proof of 
 a stronger result than 
 Theorem \ref{free-semigroup}, and asked  the following question.
 
 \smallskip
 
 \textbf{Question:}
 Is it true that there are no  nonconstant rational functions $f, F$  such that 
 $$ \frak H \circ f = \frak C \circ F ?$$

 The answer is  negative, due  to the fact that $ \frak C = \frak H \circ \iota$,
 hence $F$ and $f : = \iota \circ F$ yield a solution for every choice of a nonconstant rational map $F$.

On the other hand, finding a  solution pair is equivalent to finding   a rational parametrization
 of an irreducible component of the curve $ \Ga \subset \PP^1 \times \PP^1$, defined as
 $$ \Ga : = \{ (x,y) | \ \frak H (x) = \frak C (y) \}.$$
 
We have  the following Lemma, whose proof shows how we could have found
the factorization  $ \frak C = \frak H \circ \iota$ even without having read about it in the classical texts. 
 
  \begin{lemma}
The curve  
$$ \Ga \subset \PP^1 \times \PP^1, \  \Ga : = \{ (x,y) | \ \frak H (x) = \frak C (y) \},$$
 has exactly $4$ ordinary double points as singularities, and it is reducible as the union of the graph $\Ga_{\iota}$ of the involutive projectivity $\iota (x) =  -\frac{1}{2x}$ with a smooth curve $C'$ of bidegree $(2,2)$, which has genus $1$.

The equation of $\Ga$ in affine coordinates equals 
$$  ( \iota (y) -x) (  \iota (y) + x - 2 x^2  \iota (y)^2) =0 \   \Leftrightarrow (2 xy +1) (-y + 2x y^2 - x^2)= 0.$$

 \end{lemma}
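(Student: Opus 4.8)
The plan is to give $\Gamma$ by an explicit equation, exhibit the factorization by a one-line computation, and then analyse the two factors and their mutual position.

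First I would substitute $\frak H(\la)=-\tfrac{1+2\la^3}{6\la^2}$ and $\frak C(\la)=\tfrac{1-4\la^3}{6\la}$ into $\frak H(x)=\frak C(y)$ and clear denominators by multiplying through by $6x^2y$, obtaining the affine equation
$$ 4x^2y^3 - 2x^3y - x^2 - y = 0 , $$
which defines a curve of bidegree $(3,3)$ in $\PP^1\times\PP^1$. A direct expansion then verifies the identity
$$ 4x^2y^3 - 2x^3y - x^2 - y = (2xy+1)(2xy^2 - x^2 - y) , $$
so $\Gamma=\Gamma_\iota\cup C'$ with $\Gamma_\iota=\{2xy+1=0\}$ and $C'=\{2xy^2-x^2-y=0\}$. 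The linear factor is exactly $y=-\tfrac1{2x}=\iota(x)$, i.e.\ the graph of the involution $\iota$ (this must occur, since $\frak C=\frak H\circ\iota$ forces $\Gamma_\iota\subset\Gamma$), and it has bidegree $(1,1)$; the quadratic factor has bidegree $(2,2)$, read off from its $x^2$ and $y^2$ monomials, and $(1,1)+(2,2)=(3,3)$ as it should.

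Next I would show that $C'$ is smooth. In the affine chart one solves $\partial_x(2xy^2-x^2-y)=2y^2-2x=0$ together with $\partial_y(2xy^2-x^2-y)=4xy-1=0$: the first gives $x=y^2$, the second then $4y^3=1$, and at such a point $2xy^2-x^2-y=y(y^3-1)\neq0$, so there is no affine singularity. For the boundary I pass to the bihomogeneous form $F=2x_0x_1y_1^2 - x_1^2y_0^2 - x_0^2y_0y_1$ and inspect the charts $\{x_0=0\}$ and $\{y_0=0\}$: the only points of $C'$ there are $(\infty,\infty)$ and $(0,\infty)$, and at each one of $F_{x_0},F_{x_1}$ is nonzero. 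Hence $C'$ is smooth, and adjunction on $\PP^1\times\PP^1$ (with $K\sim\mathcal O(-2,-2)$ and $C'\sim(2,2)$) gives $2g(C')-2=(2,2)\cdot(2,2)+(2,2)\cdot(-2,-2)=8-8=0$, so $g(C')=1$; smoothness already implies irreducibility (alternatively, the discriminant $1+8x^3$ of $2xy^2-y-x^2$ in $y$ is not a square in $\CC(x)$).

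Finally, since both components are smooth, $\Sing(\Gamma)=\Gamma_\iota\cap C'$, and such a point is an ordinary double point of $\Gamma$ precisely when the two branches cross transversally. The intersection number is $(1,1)\cdot(2,2)=4$. Parametrising $\Gamma_\iota$ by $x$, so $y=-\tfrac1{2x}$, and substituting into $2xy^2-x^2-y$ yields $\tfrac1x-x^2$, hence $x^3=1$ and the three distinct affine points $\bigl(x,-\tfrac1{2x}\bigr)$ with $x\in\{1,\e,\e^2\}$; the fourth intersection point is $(0,\infty)$, which lies on both curves (while the other boundary point $(\infty,\infty)$ of $C'$ is not on $\Gamma_\iota$). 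Transversality at the affine points follows from
$$ \det\begin{pmatrix} 2y & 2x \\ 2y^2-2x & 4xy-1 \end{pmatrix} = 4xy^2 - 2y + 4x^2 = \frac{2+4x^3}{x} = \frac{6}{x} \neq 0 , $$
using $y=-\tfrac1{2x}$ and $x^3=1$; at $(0,\infty)$, in the chart $(x,v)$ with $v=1/y$, $\Gamma_\iota$ has tangent $2x+v=0$ and $C'$ has tangent $2x-v=0$, which are distinct. Therefore $\Gamma$ has exactly these four ordinary double points and no others.

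The only genuine obstacle is bookkeeping at the boundary of $\PP^1\times\PP^1$: one must bihomogenise with the correct bidegree rather than the naive total degree, chase the point $(0,\infty)$ (and the tangency of $C'$ with $\{x=\infty\}$ at $(\infty,\infty)$) through the right affine charts, and confirm that the four points predicted by the intersection number $(1,1)\cdot(2,2)=4$ are genuinely distinct — the trap being a hidden tangency that would fuse two of them into a tacnode, which the Jacobian and tangent-line computations above exclude.
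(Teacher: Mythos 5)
Your proposal is correct, and all the computations check out (the cleared equation $4x^2y^3-2x^3y-x^2-y=0$, its factorization $(2xy+1)(2xy^2-x^2-y)$, the smoothness of $C'$ including the boundary points $(\infty,\infty)$ and $(0,\infty)$, the adjunction computation $g(C')=1$, and the four transverse intersection points $(x,-\tfrac1{2x})$ with $x^3=1$ together with $(0,\infty)$, which agree with the paper's description of the nodes as pairs of ramification points). Your route, however, is genuinely different from the paper's. The paper argues conceptually: it computes the arithmetic genus $p(\Ga)=4$, locates the singular points a priori by the fiber-product principle that they must lie over ramification points of $\frak H$ and $\frak C$ (hence over $\sF\times\sT$, giving exactly $4$ nodes), then rules out a $(1,2)+(2,1)$ splitting and pins down the $(1,1)$ component by a reality argument plus the projective equivalence of the ordered $4$-tuples $\sF$ and $\sT$, which forces it to be the graph of $\iota$; only at the end does it produce the factorization from the identity $\frak C=\frak H\circ\iota$. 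You instead clear denominators, factor the bidegree-$(3,3)$ equation directly, and verify smoothness, transversality and the genus by explicit Jacobian and adjunction computations. What the paper's approach buys is the explanation of \emph{why} the factorization exists — it effectively reconstructs $\iota$ from the geometry of the Fermat and triangle orbits — whereas your approach is self-contained and in fact makes explicit several points the paper leaves implicit, notably the smoothness of $C'$, the identification of the fourth node at $(0,\infty)$, and the transversality of the two branches at each node.
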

 \begin{proof}
 $\Ga$ is   a curve of bidegree $(3,3)$, hence  its arithmetic genus $p(\Ga)$
 satisfies $ 2 p(\Ga) - 2= 6$, hence $p(\Ga) = 4$.
 
We shall  show that
 $\Ga$ has $4$ singular  points, which are ordinary double points (nodes). 
 
 The singular points must 
 be pairs $(x,y)$ where $x$ is a ramification  point for $ \frak H$, 
 $y$ is a ramification point for $ \frak C$.
 
 Hence $ x \in \sF$, $y\in \sT$: and, as we observed, $ \frak H$
 yields a bijection between $\sF$ and $\sT$, while $\frak C$
 leaves $\sT$  fixed  and yields a bijection
 of $\sT$ onto itself.
 
 The conclusion is that, since for the singular points we must moreover have
  $\frak H (x) = \frak C (y)$, the curve $\Ga$ has exactly $4$ singular points; and at these
 the respective local equations of $\frak H (x) , \frak C (y)$, vanish of order exactly two.
 Hence the curve has exactly $4$ nodes as singularities.
 
 $\Ga$ is reduced, and if  it were reducible, since it does  have neither horizontal nor vertical components,  it would have a  decomposition into irreducible components of type $(1,1) + (2,2)$ or $(1,2) + (2,1)$: because if it had
 three components of bidegree $(1,1)$ it would have $6$ singular points and not $4$.
 
 Let us first consider the possibility (1,1) + (2,2).
 
 Since the curve $\Ga$  is a real curve, the component of type $(1,1)$ would be real, 
   hence it would be the graph of a real projectivity of $\PP^1(\RR)$ to itself.
 
 Observe that  the points of $\sF$ map to $\sT$ as follows under $\frak H$:
 $$ [ 0, 1, \e, \e^2 ] \mapsto [ \infty , -\frac{1}{2},  -\frac{\e}{2},-\frac{\e^2}{2}]$$
 while the points of $\sT$ map to $\sT$ as follows under $\frak H$:  
 $$ [ \infty , -\frac{1}{2},  -\frac{\e}{2},-\frac{\e^2}{2}] \mapsto [ \infty , -\frac{1}{2},  -\frac{\e^2}{2},-\frac{\e}{2}].$$
 Indeed the ordered 4-tuples 
 $ [ 0, 1, \e, \e^2 ]  $ and $ [ \infty , -\frac{1}{2},  -\frac{\e^2}{2},-\frac{\e}{2}]$
are  projectively equivalent, hence there is exactly one  projectivity sending the first
four ordered points to the other ones, namely $\iota :  x \mapsto -\frac{1}{2x}$.

Hence the component of type $(1,1)$ should be the graph of $\iota$.

 We recall now  the property that $ \frak C = \frak H \circ \iota$,
which shows that the equation $\frak H (x) = \frak C (y)$  is satisfied 
for $ y = \iota x$.

Hence $\Ga$ is the union of the  curve $\Ga_{\iota} : = \{  2 x y + 1=0\}$, and of a curve
of type $(2,2)$ whose equation is derived as follows

$$ \frak H (x) - \frak C (y) =  \frak H (x) -  \frak H (\iota (y)) = 
( \iota (y) -x) (  \iota (y) + x - 2 x^2  \iota (y)^2)  = (2 xy +1) (-y + 2x y^2 - x^2).$$

This factorization, shows our assertion and, by the way, excludes the possibility of a decomposition of type  (2,1) + (1,2).

 \end{proof}

 As a consequence, we have the following
 \begin{theorem}\label{ritt}
 If $f, F$   are nonconstant rational functions such that 
 $$ \frak H \circ f = \frak C \circ F $$
 then $ F = \iota \circ f$.
 
 In particular, we have cancellation: 
 $$ \frak H \circ F_1 =   \frak H \circ F_2 \Rightarrow F_1= F_2.$$

 \end{theorem}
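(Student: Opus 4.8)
The plan is to turn a solution pair $(f,F)$ of $\frak H\circ f=\frak C\circ F$ into a single parametrization of the curve $\Ga=\{(x,y)\mid \frak H(x)=\frak C(y)\}$ and then to invoke the component decomposition of $\Ga$ from the preceding Lemma. First I would observe that, $\PP^1$ being a smooth curve, $f$ and $F$ are morphisms, so $\Phi=(f,F)\colon\PP^1\ra\PP^1\times\PP^1$, $t\mapsto(f(t),F(t))$, is a morphism whose image is contained in $\Ga$. Its image closure $Z:=\overline{\Phi(\PP^1)}$ is irreducible, and since $f$ is nonconstant the first projection $Z\ra\PP^1$ is dominant, so $\dim Z=1$. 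Hence $Z$ is an irreducible curve inside $\Ga$, and therefore equals one of the two irreducible components exhibited in the Lemma: either the graph $\Ga_{\iota}=\{2xy+1=0\}$ of $\iota$, or the smooth bidegree $(2,2)$ curve $C'$ of genus $1$.

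Next I would rule out $Z=C'$: a nonconstant morphism $\PP^1\ra C'$ would induce an inclusion of function fields $\CC(C')\hookrightarrow\CC(\PP^1)=\CC(t)$, forcing $C'$ to be rational by L\"uroth's theorem (equivalently, Riemann--Hurwitz gives $g(\PP^1)\geq g(C')$), contradicting $g(C')=1$. So $Z=\Ga_{\iota}$, which means $2f(t)F(t)+1\equiv 0$, i.e. $F=\iota\circ f$; this is the first assertion.

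For the cancellation statement I would argue purely formally. From $\frak C=\frak H\circ\iota$ and $\iota\circ\iota=\mathrm{id}$ we obtain $\frak H=\frak C\circ\iota$. Hence, if $\frak H\circ F_1=\frak H\circ F_2$ with $F_1,F_2$ nonconstant, rewriting the right-hand side as $\frak C\circ(\iota\circ F_2)$ puts the identity in the form $\frak H\circ F_1=\frak C\circ(\iota\circ F_2)$; applying the first part with $f=F_1$, $F=\iota\circ F_2$ yields $\iota\circ F_2=\iota\circ F_1$, and composing with $\iota$ gives $F_1=F_2$.

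I do not anticipate a real obstacle here: the substantive input---the singularity count and the splitting off of the genus $1$ component---is already supplied by the Lemma. The only point that calls for care is confirming that $Z$ is genuinely one-dimensional and irreducible, so that it must be an entire component of $\Ga$ rather than just a point sitting on it; this is precisely where the nonconstancy of $f$ (and $F$) is used.
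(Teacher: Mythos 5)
Your proposal is correct and follows essentially the same route as the paper: parametrize a component of the curve $\Ga=\{\frak H(x)=\frak C(y)\}$ by $t\mapsto(f(t),F(t))$, use the genus-$1$ component $C'$ from the Lemma to exclude everything but the graph $\Ga_{\iota}$ (the paper states this in one line, you justify it via L\"uroth/Riemann--Hurwitz), and deduce cancellation by the same substitution $F:=\iota\circ F_2$. Nothing further is needed.
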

 \begin{proof}
 The functions $ f(t), F(t)$ yield a rational map of $\PP^1$ 
 dominating  a component of $\Ga$. Since $C'$ has genus $1$, this component
 must be the graph $\Ga_{\iota}$, hence $F (t) = \iota ( f(t))$,
 as claimed.
 
 For the second assertion, set $f : = F_1$, $F : = \iota F_2$: then 
 $ \frak H \circ f = \frak C \circ F $ hence 
 $$ \iota \circ  F_2 = \iota \circ F_1 \Rightarrow F_1 = F_2.$$
 
 \end{proof}

 \begin{remark}
We observe now  how  a second proof of Theorem \ref{free-semigroup-2} 
 follows  from Theorem \ref{ritt}.

Assume that we have two words $w_1(h,i), w_2(h,i)$
such that $w_1(\frak H,\iota)= w_2(\frak H,\iota)$.

The exponent $e(h)$ of the letter $h$ is equal for both words  since the degree
of the corresponding rational map is $3^{e(h)}$.

We  show now  that $w_1=w_2$,  by induction on $e(h)$.
 
In the case where  $w_1 = i w'_1$, $w_2 = i w'_2$  
it suffices to multiply by $i$ on the left and reduce (because $\iota$ is invertible)
to the case where both words begin with $h$.

The second assertion of Theorem \ref{ritt} says now that if 
$w_1 = h w'_1$, $w_2 = h w'_2$, then $w'_1(\frak H,\iota)= w'_2(\frak H,\iota)$,
and we are done by induction on $e(h)$.

There remains  the case where   $w_1 = h w'_1$, $w_2 = i w'_2$.
In this case we can obviously assume that $w'_2$ is non-empty, hence $w'_2 = h w$.

Apply then the  equality of the corresponding maps to $\infty$: we have
$$ w(\frak H,\iota) (\infty), w'_1(\frak H,\iota) (\infty) \in \{0, \infty\},$$
because 
$$\frak H (\{0, \infty \} )= \{\infty \}, \ \ \iota (\{0, \infty \} )= \{0, \infty \}.$$

For the same reason, and since $\iota$ exchanges $0, \infty$,  we get
$$ w_1(\frak H,\iota) (\infty) = \infty, \ w_2(\frak H,\iota) (\infty) = 0,$$
a contradiction.

\qed

\end{remark}



\section{The geometry  of the map $\mathfrak C$ and its real periodic points}

In this section we will begin a qualitative study of the geometry of $\mathfrak C$ and of its iterates.  In this respect it is important to understand the behaviour at the fixed points of $\mathfrak C$. 

\begin{defin}[\cite{Milnor}]
Let $f:\BP^1 \ra \BP^1$ be an endomorphism of degree $m$ larger than 1. Let $p\in \BP^1$ be a fixed point of $f$ and suppose that $f$ has the following expression in a local coordinate $t$ near $p$:
$$
f(t)=a t +  bt^2 + \cdots
$$
Then $p$ is called  \emph{attracting}  (resp.  \emph{repelling}, resp. \emph{indifferent}) if $|a| < 1$ (resp. $|a| > 1$, resp. $|a|=1$). If $a=0$ then $p$ is called \emph{superattracting}.

If $p$ is an attracting fixed point, we define the \emph{basin of attraction} of $p$ as the open set $\Omega\subset \BP^1$ consisting of all points $z\in \BP^1$ such that the sequence $f(z), f^2(z), f^3(z), \dots$ of their images under successive iterates of $f$ converges to $p$.
\end{defin}

By Prop.  \ref{P:fixpts} we know that 
$$
\mathfrak C (\lambda) = \frac{1-4\lambda^3}{6\lambda}
$$
  has 4 fixed points counted with multiplicity, because it has degree 3. In fact, as we already mentioned in Section 4, they are the points
  $$
  \infty, -\frac{1}{2}, -\frac{1}{2}\pm \frac{\sqrt{3}}{2}
  $$
and therefore they have multiplicity 1.  We also computed in Section 4 that  $\mathfrak C$  has the following expression in a local coordinate $u$ around $\infty$:
$$
\mathfrak C(u)= - \frac{3}{2} u^2  ( 1 - 1/4 (u^3))^{-1}
$$
Therefore $\infty$ \emph{is a superattracting fixed point}  ($a=0$).

At the point $-\frac{1}{2}$ we can write  $\lambda +\frac{1}{2} = v$ and we get the following expression in the coordinate $v$: 
$$
 \widetilde{\mathfrak C}(v):=\mathfrak C(v-\frac{1}{2}) + \frac{1}{2}= \frac{3v^2-2v^3}{3(v-\frac{1}{2})} = \frac{v^2(1-\frac{2}{3}v)}{v-\frac{1}{2}}
$$
Therefore $-\frac{1}{2}$  \emph{is a superattracting fixed point}  ($a=0$).

At the point $-\frac{1}{2} -  \frac{\sqrt{3}}{2}$ we have (computed  by hand, and checked 
with WolframAlpha):
$$
 \mathfrak C(v-\frac{1}{2}(1+\sqrt{3}))+  \frac{1}{2} (1 + \sqrt{3})=\frac{(1 - 4 (v -\frac{1}{2} (1 + \sqrt{3})^3)}{6 (v -  \frac{1}{2} (1 +\sqrt{3}))} +  \frac{1}{2} (1 + \sqrt{3})= 
 $$
 $$
 = \sqrt{3} v + (1 - \sqrt{3}) v^2 + \cdots
$$
Therefore $-\frac{1}{2} -  \frac{\sqrt{3}}{2}$ \emph{is a repelling fixed point} ($a=\sqrt{3}$).

At the point $-\frac{1}{2} +  \frac{\sqrt{3}}{2}$ we have (computed  by hand, and checked with WolframAlpha):
$$
 \mathfrak C(v-\frac{1}{2}(1-\sqrt{3}))= - \sqrt{3} v + (1 + \sqrt{3}) v^2 + \cdots
$$
and therefore also $-\frac{1}{2} +  \frac{\sqrt{3}}{2}$ \emph{is a repelling fixed point} ($a=-\sqrt{3}$).

\medskip

Moreover, we have:

\begin{lemma}
The ramification points of $\mathfrak C$ are 4 points counted with multiplicity one, namely 
$\la = \infty$ and the three roots of $ (2\la)^3 = -1$ (that is, $\la=-\frac{1}{2}  \eta$
where $\eta$ is a cubic root of $1$).
\end{lemma}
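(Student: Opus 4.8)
The plan is to compute the derivative of $\mathfrak C$ in the affine chart, locate its zeros there, then separately analyze the behaviour at the two points $0$ and $\infty$ that the affine derivative does not see, and finally invoke Riemann--Hurwitz to confirm that nothing has been missed.

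First I would write $\mathfrak C(\lambda)=\dfrac{1-4\lambda^3}{6\lambda}=\dfrac{1}{6\lambda}-\dfrac{2\lambda^2}{3}$ and differentiate on $\CC^{*}=\CC\setminus\{0\}$:
$$\mathfrak C'(\lambda)=-\frac{1}{6\lambda^2}-\frac{4\lambda}{3}=-\frac{8\lambda^3+1}{6\lambda^2}.$$
Hence the critical points in $\CC^{*}$ are exactly the zeros of $8\lambda^3+1$, i.e.\ the three roots of $(2\lambda)^3=-1$, which are the points $\lambda=-\tfrac12\eta$ with $\eta^3=1$. Since $8\lambda^3+1$ has three distinct roots, all different from $0$ (where its derivative $24\lambda^2$ vanishes), $\mathfrak C'$ vanishes to order exactly $1$ at each of them; so each is a ramification point of ramification index $2$, contributing $1$ to the total ramification.

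Next I would handle $\lambda=0$ and $\lambda=\infty$. At $\lambda=0$ there is a pole, and as recalled earlier the fibre $\mathfrak C^{-1}(\infty)$ consists of $\lambda=0$ with multiplicity $1$ and $\lambda=\infty$ with multiplicity $2$; in particular $\mathfrak C$ is unramified at $0$. At $\lambda=\infty$, using the local coordinate $u=1/\lambda$, equation \eqref{C-infty} gives that $\mathfrak C$ reads $u\mapsto -\tfrac32 u^2(1-\tfrac14 u^3)^{-1}$, whose leading term is $u^2$; thus $\infty$ is a ramification point of ramification index $2$, contributing one further unit to the total ramification.

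Finally, since $\mathfrak C$ has degree $3$, the Riemann--Hurwitz formula gives total ramification $\sum_P(e_P-1)=2\cdot 3-2=4$. We have already exhibited $4$ units of ramification, at the three roots of $(2\lambda)^3=-1$ and at $\infty$, so there can be no other ramification points and each of the four listed points has ramification index exactly $2$ (equivalently, the ramification divisor is the reduced divisor $\infty+\sum_{\eta^3=1}(-\tfrac12\eta)$ of degree $4$). The only point needing care is the bookkeeping at $0$ and $\infty$, since the affine derivative is blind there; once that is checked, Riemann--Hurwitz closes the argument with no further computation.
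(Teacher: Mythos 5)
Your proof is correct and follows essentially the same route as the paper: identify the critical points via vanishing of the derivative and use Hurwitz's count of four ramification points (with multiplicity) to conclude nothing is missing. The only cosmetic difference is that the paper phrases the critical condition homogeneously, as proportionality of $(6\la,\,1-4\la^3)$ with $(1,\,-2\la^2)$ (whose determinant is $-(8\la^3+1)$, with the fourth root absorbed at $\la=\infty$), whereas you compute the affine derivative and then patch in the points $0$ and $\infty$ by the local expansion \eqref{C-infty} and the fibre over $\infty$ — a slightly more explicit bookkeeping of the same argument.
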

\begin{proof}
By Hurwitz' formula we have four ramification points, these are the points where the two vectors
$$\mathfrak C, \frac{\partial \mathfrak C}{ \partial \la}$$
are proportional, i.e. the points where the two vectors 
 $$ (6 \la, 1 - 4 \la^3), ( 1,  -2 \la^2)$$
are proportional.

\end{proof}

We use the above information to show the  following.

\begin{theorem}\label{D-periodic}
The real periodic points of $\mathfrak C$ are just the four fixpoints 
$$
  \infty, -\frac{1}{2}, -\frac{1}{2}\pm \frac{\sqrt{3}}{2}
  $$

\end{theorem}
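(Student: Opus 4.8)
The plan is to study $\mathfrak C$ as a real one–dimensional dynamical system on the circle $\PP^1(\RR)$ and to prove that it has no periodic cycle of period $\ge 2$; since $\mathfrak C$ has exactly the four real fixpoints $\infty$, $-\tfrac12$ and $p_\pm:=-\tfrac12\pm\tfrac{\sqrt3}{2}$, this yields the theorem. The starting point is the coarse combinatorics of these four fixpoints on $\PP^1(\RR)$. They cut the circle into four arcs; since the only \emph{real} ramification points of $\mathfrak C$ are $\infty$ and $-\tfrac12$ (the remaining two critical points $-\tfrac{\e}{2},-\tfrac{\e^2}{2}$ being non-real) while $\mathfrak C$ fixes both $\infty$ and $-\tfrac12$, these two ``fold'' points are opposite on the circle, so in cyclic order the arcs are
$$J_1=(-\infty,p_-),\quad J_2=(p_-,-\tfrac12),\quad J_3=(-\tfrac12,p_+),\quad J_4=(p_+,+\infty).$$
Feeding the local expansions computed above ($u\mapsto-\tfrac32u^2$ at $\infty$, $v\mapsto-2v^2$ at $-\tfrac12$, multiplier $+\sqrt3$ at $p_-$ and $-\sqrt3$ at $p_+$) into the fact that no $J_i$ contains a critical point in its interior, one reads off the images of the four arcs: $\mathfrak C$ maps $J_1$ homeomorphically onto $J_1$ and $J_2$ homeomorphically onto $J_2$, while
$$\mathfrak C(J_3)=J_2\cup\{p_-\}\cup J_1\cup\{\infty\}\cup J_4,\qquad \mathfrak C(J_4)=J_3\cup\{-\tfrac12\}\cup J_2\cup\{p_-\}\cup J_1.$$

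Next I would eliminate $J_1$ and $J_2$. Restricted to the closed arc $\overline{J_1}$, $\mathfrak C$ is a self–homeomorphism fixing both endpoints, with $\infty$ superattracting and $p_-$ repelling; as there is no fixpoint in the open arc, $\mathfrak C(\la)-\la$ keeps a constant sign there, so every forward orbit in $J_1$ converges monotonically to $\infty$. Likewise every forward orbit in $J_2$ converges to $-\tfrac12$. Hence a periodic cycle of period $\ge2$ — which cannot meet $\{\infty,-\tfrac12,p_-,p_+\}$ — lies entirely in $J_3\cup J_4$, and since $\mathfrak C(J_3)\cap(J_3\cup J_4)=J_4$ and $\mathfrak C(J_4)\cap(J_3\cup J_4)=J_3$, the cycle alternates between $J_3$ and $J_4$. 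In particular its period is $2m$, and the $J_3$–points of the cycle form a cycle of $g:=\mathfrak C^{\circ2}$.

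The decisive step is then a monotonicity contradiction. Tracking which sub-arcs survive the bounce $J_3\to J_4\to J_3$: a point of $J_3$ whose image lies in $J_4$ must lie in $(0,p_+)$ (since $\mathfrak C^{-1}(\infty)\cap J_3=\{0\}$), and for its image to return to $J_3$ it must in fact lie in $M_3:=(\beta,p_+)$, where $\beta\in(0,p_+)$ is the unique real root of $4\la^3+6\la-1=0$ (equivalently $\mathfrak C(\beta)=1$, and $\mathfrak C^{-1}(-\tfrac12)\cap J_4=\{1\}$). So the $g$–cycle found above lies in $M_3$. Now $g=\mathfrak C^{\circ2}$ is a continuous real-valued function on $[\beta,p_+]$ (the only pole $\la=0$ of $\mathfrak C$ lies neither in $[\beta,p_+]$ nor in $\mathfrak C([\beta,p_+])=[p_+,1]$); it fixes $p_+$ with $g'(p_+)=\mathfrak C'(p_+)^2=3>1$; and $g(\beta)=\mathfrak C(1)=-\tfrac12<\beta$. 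Moreover $g$ has no fixpoint in $(\beta,p_+)$: by Proposition~\ref{P:fixpts} and Corollary~\ref{harmonic} the real fixpoints of $\mathfrak C^{\circ2}$ are only the four fixpoints of $\mathfrak C$ together with the six \emph{non-real} harmonic cubics, none of which lies in $(\beta,p_+)\subset(0,p_+)$. Therefore $g(\la)-\la$ is continuous and zero–free on $[\beta,p_+)$, negative at $\beta$ and, because $g'(p_+)>1$, negative just to the left of $p_+$; hence $g(\la)<\la$ for every $\la\in[\beta,p_+)$. But then any $g$–cycle inside $M_3=(\beta,p_+)$ would be strictly decreasing, which is absurd. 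This rules out cycles of period $\ge2$ and finishes the proof.

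The main obstacle is the first step: pinning down the arc–image combinatorics exactly right — including the identification of the surviving sub-arc $M_3$ — since it requires combining all four local expansions with the precise positions and images of the two real critical points, and being careful about the orientation at each fold. It is precisely the signs $-\tfrac32<0$ and $-2<0$ in the expansions at $\infty$ and $-\tfrac12$ that force $\mathfrak C(J_1)=J_1$ and $\mathfrak C(J_2)=J_2$ (rather than $\mathfrak C$ interchanging some of the four arcs). Once this combinatorial skeleton is in place, the remainder is a short intermediate–value argument together with the already established count of the fixpoints of $\mathfrak C^{\circ2}$.
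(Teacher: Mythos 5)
Your proof is correct and takes essentially the same route as the paper: the same four arcs cut out by the four fixpoints, the same transition combinatorics (Arcs 1 and 2 invariant, a period $\ge 2$ cycle forced to alternate between Arcs 3 and 4), and the same final monotone-escape contradiction based on Corollary \ref{harmonic} together with the multiplier $3$ of $\mathfrak C^{\circ 2}$ at the repelling fixpoint $-\frac12+\frac{\sqrt3}{2}$ --- you merely work on the Arc-3 side with the explicit surviving subarc $(\beta,p_+)$, where the paper uses the mirror subarc of Arc 4 on which $\mathfrak C^{\circ 2}(t)>t$. (One cosmetic slip: the six non-real period-two points are the non-real points of $\sT\cup Harm$, not all of them harmonic, but this does not affect your argument, since the real fixpoints of $\mathfrak C^{\circ 2}$ are indeed just the four fixpoints of $\mathfrak C$.)
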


\begin{proof}
The real line is divided into 4 arcs, with increasing value of $\la$,
and with extremes the four fixpoints 
$$ (\infty, -\frac{1}{2} -  \frac{\sqrt{3}}{2}, -\frac{1}{2},-\frac{1}{2} +  \frac{\sqrt{3}}{2},\infty).$$
We observe the following
\begin{enumerate}
\item
Arc 1 goes bijectively onto itself  respecting the increasing order
\item
Arc 2 goes bijectively onto itself  respecting the increasing order
\item
the common extreme point of Arcs 1 and 2 is a  repelling     fixpoint, hence
points in the Arcs 1 and 2 which are extreme points are transformed by the iterates of 
$\mathfrak C$ to points which are closer to the external extreme points $ \infty, -\frac{1}{2}$;
in particular there are no other periodic points in the Arcs 1 and 2.
\item
Arc 3 is sent bijectively in a decreasing way to the sequence of Arcs 2-1-4
\item
Arc 4 is sent bijectively in a decreasing way to the sequence of Arcs 3-2-1

\end{enumerate}

Assume that $P$ is a periodic point different from the four fixpoints (extremes of the four Arcs). We have seen in item (3) that $P$  must lie either in Arc 3 or in Arc 4.

If a power  $\frak C^{\circ i}$ sends $P$ in a point of Arcs 1,2, then 
$P$ cannot be a fixpoint of  $\mathfrak C^{\circ n}$ for all $n \geq i$,  hence $P$ is not periodic.

If all transforms of $P$ remain in the Arcs 3 and 4,  we observe that the map is decreasing in the union of the two arcs,
and that if P is periodic, it may be fixed only by an even iterate of $\frak C$.

Moreover, if $\frak C^{\circ 2i} (P) = P$ then, setting $Q : = \frak C(P)$ , it follows that also 
$$ \frak C^{\circ 2i} (Q) = \frak C^{\circ 2i+1 } (P) = \frak C (\frak C^{\circ 2i} (P)) = \frak C(P) = Q.$$

Hence it suffices to consider periodic points   $P$  lying  in Arc 4, and  the action of even iterates on them, assuming that each $\frak C^{\circ 2}$  continues 
to  send the point again to Arc 4.

We observe now that  $\frak C^{\circ 2}$ sends a small  subarc starting from the fixed   point 
$-\frac{1}{2} +  \frac{\sqrt{3}}{2}$ 
bijectively to Arc 4, then the rest of Arc 4 
is sent to the   arcs 1 and 2.

Since there are no fixed points for $\frak C^{\circ 2}$ in the open Arc 4, as we showed in Corollary \ref{harmonic}, and the derivative of $\frak C^{\circ 2}$ is equal to $3$ at the repelling  fixpoint $-\frac{1}{2} +  \frac{\sqrt{3}}{2}$, hence 
$\frak C^{\circ 2} (t) $ is   strictly greater than $t$ on the whole subarc, hence 
by induction all our points $P$ in this subarc continue to be sent by iterates of $\frak C^{\circ 2}$ to  points $P'$  with $ P' > P$.
Hence $P$ cannot be a periodic point, a contradiction.

\end{proof}

We have seen in the proof above that the points of the Arcs 1 and 2 have 
three real counter images, while the points of the Arcs 3 and 4 have 
only one real counter image. We get a nice picture of the geometry of $\mathfrak C$
via the following Proposition showing which are the counterimages of the Arcs 3 and 4.
\begin{proposition}
The inverse image of the real line $\PP^1(\RR)$ under $\mathfrak C$
consists of $\PP^1(\RR)$ and of a closed curve $M$ consisting of two complex conjugate 
vertical arcs joining $-\frac{1}{2}$ with $\infty$, and given by the equation 
$$ 8 x r^2 = -1,  \text{where} \ \la = x + i y, \ r^2 = x^2 + y^2 = \la \bar\la.$$

The complement of $M \cup \PP^1(\RR)$ consists of four regions, and 
the right halves of the upper, respectively lower half plane map with degree 2
to the respective half planes.

\end{proposition}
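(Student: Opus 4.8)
The plan is to find $\mathfrak C^{-1}(\PP^1(\RR))$ by a direct reality computation, then describe the extra curve, then count the regions with a Jordan-curve argument, and finally extract the two degrees from the position of the critical points of $\mathfrak C$. For $\la\in\CC\setminus\{0\}$ I would write $\mathfrak C(\la)\in\PP^1(\RR)$ as $\mathfrak C(\la)=\overline{\mathfrak C(\la)}$, i.e. $\bar\la(1-4\la^3)=\la(1-4\bar\la^3)$; rearranging gives $\bar\la-\la=4\la\bar\la(\la^2-\bar\la^2)=4r^2(\la-\bar\la)(\la+\bar\la)$, and with $\la-\bar\la=2iy$, $\la+\bar\la=2x$ this collapses to $-2iy(1+8xr^2)=0$, so $y=0$ or $8xr^2=-1$. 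Since also $\mathfrak C(0)=\mathfrak C(\infty)=\infty$, this yields $\mathfrak C^{-1}(\PP^1(\RR))=\PP^1(\RR)\cup\overline M$ with $M=\{8xr^2=-1\}$ and $\overline M=M\cup\{\infty\}$. Writing the equation as $8x^3+8xy^2+1=0$, for fixed $y$ the left-hand side is strictly increasing in $x$, so $M$ is the graph of a function $x=x(y)$, $y\in\RR$ (a ``vertical'' simple curve); from $y^2=-\tfrac1{8x}-x^2\ge0$ and $x<0$ one reads off $-\tfrac12\le x<0$ on $M$, with $x(0)=-\tfrac12$, $x(y)\to0^-$ as $|y|\to\infty$, and $x(-y)=x(y)$. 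Hence $\overline M$ is a Jordan curve through $-\tfrac12$ and $\infty$, the union of the two complex conjugate arcs $M^{\pm}=\overline M\cap\{\pm\operatorname{Im}\la\ge0\}$, each a simple arc from $-\tfrac12$ to $\infty$ with interior in the open half plane $H^{\pm}$ and meeting $\PP^1(\RR)$ only at its endpoints.

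Next I would use that $M^{+}$ is a simple arc in the closed hemisphere $\overline{H^{+}}$ (a closed disk with boundary circle $\PP^1(\RR)$) with both endpoints on the boundary and interior in $H^{+}$, so by the Jordan--Schoenflies theorem $H^{+}\setminus M^{+}$ has exactly two components $U_R$ and $U_L$, named so that $\overline{U_R}\cap\PP^1(\RR)=\{\la\in\RR:\la>-\tfrac12\}$ (equivalently $i\in U_R$, since $M^{+}\subset\{\operatorname{Re}\la<0\}$) and $\overline{U_L}$ meets $\PP^1(\RR)$ in $\{\la<-\tfrac12\}$. By complex conjugation $H^{-}\setminus M^{-}=L_R\sqcup L_L$ with $L_R=\overline{U_R}$, $L_L=\overline{U_L}$. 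Therefore $\PP^1(\CC)\setminus(\PP^1(\RR)\cup M)$ has exactly the four components $U_R,U_L,L_R,L_L$, and $U_R,L_R$ are the ``right halves'' of $H^{+},H^{-}$.

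For the dynamics, note that for each region $R$ we have $\partial R\subseteq\PP^1(\RR)\cup M=\mathfrak C^{-1}(\PP^1(\RR))$, hence $\mathfrak C(\partial R)\subseteq\PP^1(\RR)$; since $R$ is connected and $\mathfrak C(R)$ avoids $\PP^1(\RR)$, it lies in $H^{+}$ or in $H^{-}$, and one test point settles which — e.g. $\mathfrak C(i)=\tfrac{4-i}{6}\in H^{-}$ and $\mathfrak C(-1+i)=\tfrac{-1+15i}{12}\in H^{+}$ (the other two by conjugation) give $\mathfrak C(U_R),\mathfrak C(L_L)\subseteq H^{-}$ and $\mathfrak C(U_L),\mathfrak C(L_R)\subseteq H^{+}$. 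Now $\mathfrak C|_R\colon R\to H^{\pm}$ is nonconstant, holomorphic and proper (its boundary maps into $\PP^1(\RR)$), hence a branched covering of its target. Using that the ramification points of $\mathfrak C$ are $\infty,-\tfrac12,-\tfrac\e2,-\tfrac{\e^2}2$, and that $-\tfrac\e2=\tfrac14-\tfrac{\sqrt3}4 i$ and $-\tfrac{\e^2}2=\tfrac14+\tfrac{\sqrt3}4 i$ have real part $\tfrac14>0$ — so they lie in $L_R$, resp. $U_R$, and not on $M$, which forces $\operatorname{Re}\la<0$ — the regions $U_L$ and $L_L$ contain no critical point. Hence $\mathfrak C|_{U_L}\colon U_L\to H^{+}$ and $\mathfrak C|_{L_L}\colon L_L\to H^{-}$ are unramified proper holomorphic maps onto a simply connected target with connected source, hence biholomorphisms of degree $1$. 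Since $\deg\mathfrak C=3$ and the preimage of a generic $w\in H^{+}$ splits between the two regions $U_L$ and $L_R$ that map into $H^{+}$, it follows that $\deg(\mathfrak C|_{L_R})=2$, and symmetrically $\deg(\mathfrak C|_{U_R})=2$, which is exactly the asserted statement about the right halves.

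The algebra here is short; the real work is the topological step — establishing cleanly that the two interlocking Jordan curves $\PP^1(\RR)$ and $\overline M$ separate $\PP^1(\CC)$ into precisely four Jordan regions (which forces verifying that $M^{+}$ is a genuine simple arc with interior in the open half plane and endpoints $-\tfrac12,\infty$) — together with the bookkeeping that (i) pairs each region with the correct half plane and (ii) places the two non-real ramification points $-\tfrac\e2,-\tfrac{\e^2}2$ inside the right-hand regions rather than on $M$. Point (ii) is the crux: it is precisely the location of these critical points that forces the degree of $\mathfrak C$ on the right halves to be $2$ and not $1$.
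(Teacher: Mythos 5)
Your proof is correct and follows essentially the same route as the paper's: the identical reality computation $\mathfrak C(\la)=\overline{\mathfrak C(\la)}$ yielding $y=0$ or $8xr^2=-1$, followed by locating the two non-real ramification points off $M$ to account for the degrees; you merely make explicit the Jordan-region, properness and degree bookkeeping that the paper leaves implicit. Note also that your values $\frac14\mp\frac{\sqrt3}{4}i$ for $-\frac{\e}{2},-\frac{\e^2}{2}$ correct a sign slip in the paper (which writes $-\frac14(1\pm i\sqrt3)$), though the relevant conclusion there, namely $xr^2>-\frac18$ so that these critical points lie off $M$ in the right-hand regions, is unaffected.
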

\begin{proof}
The equation 
$$\mathfrak C (\la) = \overline{\mathfrak C(\la)} \Leftrightarrow (\bar\la - \la )= 4  \la \bar\la
(\la^2 - \bar\la^2) \Leftrightarrow \la \in \RR \ {\rm or} \ 8 x r^2 = -1.$$
For $\la \in \RR$, $8 x r^2 = -1$ means $ \la = -\frac{1}{2}$.

Moreover the two non real ramification points are $- \frac{\eta}{2}$
where $\eta$ is a primitive cubic root of $1$, that is,
$$ - \frac{1}{4} (1  \pm i \sqrt{3}).$$
These points have $ -\frac{1}{16} = x r^2 > -\frac{1}{8}$.

\end{proof}

\smallskip 

{\bf The basin of attraction of } $-\frac{1}{2}$.

It is convenient to change affine coordinate setting $v: =\lambda+ \frac{1}{2}$,
so that the superattractive fixed points are $v = 0, v = \infty$.

In order to study the basin of attraction of $\infty$ we consider
 the set of points with 
 $$| \widetilde{\mathfrak C}(v)| > |v|$$

(recall that $ \widetilde{\mathfrak C}$ is the expression of $ \mathfrak C$ in the coordinate $v$), while to study the basin of attraction of $\la=-1/2$ ($v=0$)
we consider 
the set of points with
 $$| \widetilde{\mathfrak C}(v)| < |v|$$

We  study therefore  the set of  points $v=\lambda+ \frac{1}{2}$ such that $| \widetilde{\mathfrak C}(v)|=|v|$, i.e. such that 
$$
|v|\left| 1-\frac{2}{3}v\right| = \left| v-\frac{1}{2} \right|
$$
which, in terms of $\lambda$, is equivalent to:
$$\left|\lambda+\frac{1}{2}\right||2(\lambda-1)|=3|\lambda|
$$
Writing $\lambda=x+iy=R(\cos\theta+i\sin\theta)$ we get:
$$
9R^2=(4R^2+1+4x)(1+R^2-2x) \ \Leftrightarrow \  9R^2=(4R^2+1)(1+R^2)+x(2-4R^2)
$$
$$
 \Leftrightarrow \ (2R^2-1)^2 = 2x(2R^2-1)
$$
This equality holds for $R = \frac{1}{\sqrt{2}}$ or for $2R\cos\theta = (2R^2-1)$.  The first condition gives the   circle $\sC$ boundary of the disc of radius $\frac{1}{\sqrt{2}}$ and centered at $0$.  

\medskip 
Let's study the  curve $\Gamma_D$ given by the second condition, which we rewrite as
$$
\cos\theta = \frac{2R^2-1}{2R}\  \Leftrightarrow \ \left(R-\frac{1}{2}\cos\theta\right)^2= \frac{1}{4}(2+\cos^2 \theta)
$$
$$
 \Leftrightarrow \ \left|R-\frac{1}{2}\cos\theta\right| = \frac{1}{2} \sqrt{2+\cos^2 \theta}
 $$
This condition corresponds to two possibilities:

\medskip

Case (I): $R-\frac{1}{2}\cos\theta = \frac{1}{2} \sqrt{2+\cos^2 \theta} \ \Leftrightarrow \ R=  \frac{1}{2} \left[\cos\theta+\sqrt{2+\cos^2 \theta}\right]$

Case (II): $-R+\frac{1}{2}\cos\theta = \frac{1}{2} \sqrt{2+\cos^2 \theta}\ \Leftrightarrow \ R= \frac{1}{2} \left[\cos\theta-\sqrt{2+\cos^2 \theta}\right]$

\medskip
Case (II) can be excluded because the term   $[-]$ is negative.  Case (I) describes a curve  symmetric w.r. to the real axis and intersecting it at the points where $\cos\theta= \pm 1$, i.e. at the two points 
$\left(\frac{\pm \sqrt{3} + 1}{2},0\right)$.

\begin{proposition}
The interior disc $\sB$  of $\sC$ and the interior domain $\Omega_D$ with boundary $\Gamma_D$ divide the complex projective line in four regions, such that 
on the complement of $\sB \cup \Omega_D$
and on the intersection $\sB \cap \Omega_D$ we have $| \widetilde{\mathfrak C}(v)| > |v|$,
while on  $\sB \setminus \Omega_D$ and on $ \Omega_D \setminus  \sB$ we have
$| \widetilde{\mathfrak C}(v)| < |v|$.

In particular the exterior to the circle $ |v| = 1 + \frac{\sqrt{3}}{2} $
is contained in the basin of attraction of $\infty$, while the interior to the circle 
$ |v| =  1 - \frac{\sqrt{3}}{2} $
is contained in the basin of attraction of $\la = -\frac{1}{2}$.

\end{proposition}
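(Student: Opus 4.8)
Write $\widetilde{\mathfrak C}(v)=v^{2}(1-\tfrac{2}{3}v)/(v-\tfrac{1}{2})$ and put $\phi(v):=|\widetilde{\mathfrak C}(v)|-|v|$; this is a real, continuous function on $\CC\setminus\{1/2\}$, it tends to $+\infty$ as $v\to 1/2$ and, by \eqref{C-infty}, as $v\to\infty$, and it is $<0$ in a punctured neighbourhood of the superattracting fixed point $v=0$. Since $|v||1-\tfrac{2}{3}v|$ and $|v-\tfrac{1}{2}|$ are nonnegative, for $v\neq 0,\tfrac{1}{2}$ one has
$$\operatorname{sign}\phi(v)=\operatorname{sign}\bigl(|v|^{2}\,|1-\tfrac{2}{3}v|^{2}-|v-\tfrac{1}{2}|^{2}\bigr).$$
The computation preceding the Proposition exhibits the bracketed quantity as a positive multiple of the product of the two real quadratic polynomials cutting out $\sC$ and $\Gamma_D$, normalised so as to be negative on the interior discs $\sB$ and $\Omega_D$ respectively; hence $\operatorname{sign}\phi(v)$ is the sign of that product. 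Consequently $\phi$ vanishes, off $\sC\cup\Gamma_D$, only at $v=0$, so it has constant sign on each component of the complement of $\sC\cup\Gamma_D$, and that sign is $+$ exactly when $v$ lies inside both circles or outside both, and $-$ on the two lunes. (Consistently, $\phi\to+\infty$ at $1/2\in\sB\cap\Omega_D$ and at $\infty\notin\overline{\sB\cup\Omega_D}$.) The only extra input is that $\sC$ and $\Gamma_D$ meet transversally in exactly two (complex conjugate) points, so that the complement of $\sC\cup\Gamma_D$ really is the union of the four regions $\sB\cap\Omega_D$, $\sB\setminus\overline{\Omega_D}$, $\Omega_D\setminus\overline{\sB}$ and $\PP^1(\CC)\setminus\overline{\sB\cup\Omega_D}$; this I would verify directly from the centres and radii. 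This proves the first assertion.

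For the ``in particular'' part I would argue by monotonicity of moduli along orbits. From the description of $\sC$ and $\Gamma_D$ one has $\overline{\sB}\cup\overline{\Omega_D}\subseteq\{\,|v|\le 1+\tfrac{\sqrt{3}}{2}\,\}$, so $U_{+}:=\{\,|v|>1+\tfrac{\sqrt{3}}{2}\,\}$ lies in the region $\PP^1(\CC)\setminus\overline{\sB\cup\Omega_D}$, where $|\widetilde{\mathfrak C}(v)|>|v|$. In particular $\widetilde{\mathfrak C}(U_{+})\subseteq U_{+}$, and for $v_{0}\in U_{+}$ the sequence $a_{n}:=|\widetilde{\mathfrak C}^{\circ n}(v_{0})|$ is strictly increasing; were it bounded, with limit $L$, a subsequential limit $w$ of the orbit would satisfy $|w|=L\ge a_{0}>1+\tfrac{\sqrt{3}}{2}$, hence $w\in U_{+}$, yet $|\widetilde{\mathfrak C}(w)|=\lim_{n}a_{n+1}\le L=|w|$, contradicting $|\widetilde{\mathfrak C}(w)|>|w|$ on $U_{+}$. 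Thus $a_{n}\to\infty$, i.e.\ the $\widetilde{\mathfrak C}$-orbit of $v_{0}$ tends to $\infty$, so $U_{+}$ lies in the basin of $\infty$. Symmetrically, $U_{0}:=\{\,|v|<1-\tfrac{\sqrt{3}}{2}\,\}$ is a neighbourhood of $v=0$ contained in $\overline{\sB}$ and disjoint from $\overline{\Omega_D}$, hence contained in the lune $\sB\setminus\overline{\Omega_D}$, where $|\widetilde{\mathfrak C}(v)|<|v|$; so $\widetilde{\mathfrak C}(U_{0})\subseteq U_{0}$, the sequence $|\widetilde{\mathfrak C}^{\circ n}(v_{0})|$ is non-increasing for every $v_{0}\in U_{0}$, and the same subsequential argument forces it to $0$. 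Therefore the orbit of $v_{0}$ tends to $v=0$, i.e.\ to $\lambda=-\tfrac{1}{2}$, so $U_{0}$ lies in the basin of $-\tfrac{1}{2}$.

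The dynamical core (the last paragraph) is elementary; the step most likely to require care is the purely geometric bookkeeping: checking from the explicit equations of $\sC$ and $\Gamma_D$ that the two circles cross in exactly two points (hence four complementary regions), that $\overline{\sB}\cup\overline{\Omega_D}\subseteq\{|v|\le 1+\tfrac{\sqrt{3}}{2}\}$, and that $U_{0}\cap\overline{\Omega_D}=\emptyset$.
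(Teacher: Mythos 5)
Your overall route for the first claim is the paper's own (read off the sign of $|\widetilde{\mathfrak C}(v)|-|v|$ from the product of the two real quadratics whose zero set is $\sC\cup\Gamma_D$), and you carry it out more completely than the paper does: the paper only tests the two regions containing $\infty$ and the pole, and offers no argument at all for the basin statements, whereas your last paragraph (forward invariance of $U_+$ and $U_0$, monotone moduli, subsequential-limit argument) is a correct and genuinely needed addition. That dynamical core is fine.

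The genuine gap is precisely the step you chose to import without checking. The computation preceding the Proposition does \emph{not} exhibit $|v|^2|1-\tfrac23 v|^2-|v-\tfrac12|^2$ as a positive multiple of the product of the quadratics cutting out $\sC$ and $\Gamma_D$: expanding $(4R^2+1+4x)(1+R^2-2x)$ produces an extra $-8x^2$ that the displayed chain of equivalences silently drops. The correct identity (with $R=|\lambda|$, $x=\mathrm{Re}\,\lambda$, $v=\lambda+\tfrac12$) is $|v|^2|1-\tfrac23 v|^2-|v-\tfrac12|^2=\tfrac19\,(2R^2+2x-1)(2R^2-4x-1)$, so the equality locus is the union of the circles $|\lambda+\tfrac12|=\tfrac{\sqrt3}{2}$ and $|\lambda-1|=\sqrt{3/2}$, i.e.\ $|v|=\tfrac{\sqrt3}{2}$ and $|v-\tfrac32|=\sqrt{3/2}$, not $\sC\cup\Gamma_D$. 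A sanity check shows the statement as written cannot hold: the repelling fixed points $v=\pm\tfrac{\sqrt3}{2}$ satisfy $|\widetilde{\mathfrak C}(v)|=|v|$, yet $v=\tfrac{\sqrt3}{2}$ lies strictly inside both $\sB$ and $\Omega_D$ (and $v=-\tfrac{\sqrt3}{2}$ strictly outside both), exactly where strict inequality ``$>$'' is asserted. Your sign-of-product logic goes through verbatim with the corrected circles (``$>$'' inside both or outside both, ``$<$'' on the two lunes, with equality only at the fixed point $v=0$ of the lune containing it), but the deferred ``geometric bookkeeping'' then changes the constants in the second claim: the second circle reaches out to $|v|=\tfrac32+\sqrt{3/2}>1+\tfrac{\sqrt3}{2}$, so $U_+=\{|v|>1+\tfrac{\sqrt3}{2}\}$ is neither contained in the region where $|\widetilde{\mathfrak C}(v)|>|v|$ nor obviously forward invariant; what your argument actually proves is that the exterior of $|v|=\tfrac32+\sqrt{3/2}$ lies in the basin of $\infty$ and the interior of $|v|=\tfrac32-\sqrt{3/2}$ in the basin of $\lambda=-\tfrac12$. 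In short, the step most in need of care was not the transversality check you flagged, but the one expansion you took on faith from the text.
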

\begin{proof}
Observe that the complementary set of the set 
 $$ \{ v | | \widetilde{\mathfrak C}(v)|=|v|\} = \sC \cup \Gamma_D$$
consists of
four connected components, and the inequality $| \widetilde{\mathfrak C}(v)| > |v|$
certainly holds on the component which contains $\infty$, and also on the component which contains $0$, since
 $\mathfrak C (0) = \infty$.

\end{proof}

\begin{remark} One can easily give some sharper estimates for domains contained 
in the respective basins of attractions.
\end{remark}

\section{The geometry  of the map $\mathfrak H$ and its real periodic points}

Recall that the fixpoints of the endomorphism $\mathfrak H$ are
$\infty$ and the three points solution of 
$$ (2 \la)^3 = -1;$$
they   correspond to the four triangles in the Hesse pencil.

Since $$\mathfrak H (\la) = - 
\frac{1 + 2 \la^3}{6 \la^2}$$
the four ramification points of $\mathfrak H$  are 
$$ \la =0, \la^3 = 1 \Leftrightarrow \la =  \eta, \ \eta^3 = 1.$$
 These correspond to the equianharmonic (Fermat) elliptic curves,
 see Section 2 (and also Remark 2.2 of \cite{ca-se},
and  Cor. 2.3 ibidem, which geometrically distinguishes them
in terms of the intersection points of the nine flex tangents).

Hence there are two real ramification points, $\la=0$, which maps to $\infty$,
and $\la = 1$, which maps to $- \frac{1}{2}$; but there are only two real fixpoints,
$$\infty, - \frac{1}{2}.$$

As we saw in formula \ref{H-infty}, $\infty$ is a repelling fixpoint, while also  the point $\la= - \frac{1}{2}$
is a repelling fixpoint: 
 if we 
take the affine coordinate $ v = \la + \frac{1}{2}$, then
$$ \widetilde{\mathfrak H }(v)  := \mathfrak H (v-\frac{1}{2})+\frac{1}{2} =\frac{ -4 v^3 + 12 v^2 -9 v}{ 3  (2v -1)^2}.$$   

Instead, since 
 $$\frac{\partial \mathfrak H}{\partial \la} = \frac{1 - \la^3}{ 3 \la^3} ,$$

at the fixpoints where $ (2 \la)^3 = -1$
the derivative equals 
$$  \frac{2^3  - 2^3 \la^3}{ 3 \cdot 2^3 \la^3} = \frac{ 2^3 + 1 }{-3} = -3 .$$

Hence we have exactly four repelling fixpoints.

We have the following.

\begin{theorem}\label{H-periodic}
 $\mathfrak H$ has infinitely many real periodic points.
 Indeed, $\mathfrak H^{\circ 2m}$  has, for $ m \geq 2$,  at least $ 2(3^m -1)$ real fixpoints.
 
  Likewise, the lines $\e \RR, \e^2 \RR \subset \CC \subset \PP^1(\CC)$ contain each infinitely many  periodic points.

\end{theorem}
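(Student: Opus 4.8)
The plan is to exploit the real dynamics of $\mathfrak H$ on $\PP^1(\RR)$ — in particular the fact, recorded above, that $\mathfrak H$ has real degree $r = -1$ but all four of its fixpoints are repelling (three of them with multiplier $-3$, and $\infty$, $-\tfrac12$ repelling as well) — to force an abundance of real periodic points by an intermediate-value / interval-covering argument. First I would describe the real picture: $\PP^1(\RR)$ is a circle, $\mathfrak H$ has exactly two real fixpoints, $\infty$ and $-\tfrac12$, and exactly two real ramification points, $\la = 0$ (sent to $\infty$) and $\la = 1$ (sent to $-\tfrac12$). So $\PP^1(\RR)$ is cut by $\{0,1\}$ into two closed arcs, on each of which $\mathfrak H$ restricts to a strictly monotone (since no interior critical points) continuous map; because the preimage of $\infty$ is $\{0\}$ (multiplicity $2$) together with $\infty$ itself, one checks that each of these two arcs is mapped by $\mathfrak H$ \emph{onto the whole circle} $\PP^1(\RR)$. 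This surjectivity of $\mathfrak H$ restricted to each of the two arcs is the engine: composing, $\mathfrak H^{\circ m}$ maps each of $2^m$ subarcs (obtained by repeatedly subdividing at preimages of $\{0,1\}$) onto all of $\PP^1(\RR)$, or more economically, one obtains a Markov-type covering in which a positive proportion of the $\approx 3^m$ monotone branches of $\mathfrak H^{\circ m}$ each cover the full circle.

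The key counting step: I would set up a symbolic/Markov partition. Let $I_1, I_2$ be the two closed arcs. Since $\mathfrak H(I_k) \supseteq I_1 \cup I_2$ for $k=1,2$, every finite word $w = (k_0, k_1, \dots, k_{m-1}) \in \{1,2\}^m$ gives a nonempty subarc $J_w \subset I_{k_0}$ with $\mathfrak H^{\circ j}(J_w) \subset I_{k_j}$ and $\mathfrak H^{\circ m}(J_w) = \PP^1(\RR)$; in particular $\mathfrak H^{\circ m}(J_w) \supseteq J_w$, so by the intermediate value theorem applied to $\mathfrak H^{\circ m}(t) - t$ on the arc $J_w$ (using an orientation-respecting chart on the circle, and being careful near $\infty$) each $J_w$ contains a fixpoint of $\mathfrak H^{\circ m}$. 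Distinct words give subarcs with disjoint interiors, which yields roughly $2^m$ real fixpoints of $\mathfrak H^{\circ m}$; to reach the sharper bound $2(3^m-1)$ for $\mathfrak H^{\circ 2m}$, I would instead track \emph{all} the monotone branches of $\mathfrak H^{\circ m}$ — there are $3^m$ of them, and each maps its subarc onto $\PP^1(\RR)$ — and count sign changes of $\mathfrak H^{\circ m}(t)-t$ across consecutive branch-intervals, where the endpoints of these intervals are the iterated preimages of $\{0,1\}$ (the forward images of $0$ and $1$ being eventually the fixpoints $\infty,-\tfrac12$, which keeps the endpoint bookkeeping finite). Passing to the even iterate $\mathfrak H^{\circ 2m}$ makes $\mathfrak H$ "return" monotonically and lets one be precise about the count; the constant $2(3^m-1)$ should come out as $2$ (the genuine fixpoints) subtracted from or added to the branch count $3^{2m}$-type estimate after discarding the handful of branches that straddle fixpoints — I would verify the exact arithmetic for small $m$ and then argue uniformly. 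Infinitude of real periodic points is then immediate: $\bigcup_m \Fix(\mathfrak H^{\circ 2m})$ is unbounded in cardinality.

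For the last clause, about the lines $\e\RR$ and $\e^2\RR$: here I would invoke the symmetry $\mathfrak H(\e\la) = \e\,\mathfrak H(\la)$ (established in the Lemma on $\mathfrak A_4$-equivariance, and restated above). This identity shows that the line $\e\RR \subset \CC \subset \PP^1(\CC)$, together with $\infty$, is $\mathfrak H$-invariant, and that $\mathfrak H$ on $\e\RR$ is conjugate, via $\la \mapsto \e^{-1}\la$, to a map $g$ on $\RR$ given by $g(t) = \e^{-1}\mathfrak H(\e t)$; one checks $g$ is again a real rational self-map of $\PP^1(\RR)$ of degree $3$ (indeed $g(t) = -\tfrac{\e^{-3} + 2 t^3}{6 t^2} = -\tfrac{1 + 2t^3}{6t^2} = \mathfrak H(t)$ since $\e^3 = 1$), so $g = \mathfrak H$ and the very same argument as above produces infinitely many periodic points of $\mathfrak H$ on $\e\RR$; the case $\e^2\RR$ is identical. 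The main obstacle I anticipate is the combinatorial precision needed for the exact constant $2(3^m-1)$ — controlling how many of the $\approx 3^{m}$ monotone branches of $\mathfrak H^{\circ m}$ (or $\mathfrak H^{\circ 2m}$) actually contribute a sign change of $\mathfrak H^{\circ(\cdot)}(t)-t$, and handling the branches adjacent to the repelling fixpoints $\infty$ and $-\tfrac12$ correctly (the multiplier $-3$ and the fact that a repelling fixpoint with negative multiplier is still a genuine crossing) — whereas the qualitative statements (infinitude, the $\e^i\RR$ cases) are comparatively routine once the "each arc covers the circle" observation is in hand.
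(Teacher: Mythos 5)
Your engine is false as stated. Cutting $\PP^1(\RR)$ at the two real critical points $0,1$ gives two arcs, but only the arc through $\infty$ maps onto the whole circle: the other arc $[0,1]$ is mapped by $\mathfrak H$ monotonically (increasingly) onto the closed arc from $\infty$ to $-\tfrac12$, i.e.\ onto $\{\infty\}\cup(-\infty,-\tfrac12]$, since $\mathfrak H(0)=\infty$, $\mathfrak H(1)=-\tfrac12$ and $\mathfrak H$ has no critical point in the interior. So the full-shift claim $\mathfrak H(I_k)\supseteq I_1\cup I_2$ for both $k=1,2$ fails, and with it the assertion that every monotone branch of $\mathfrak H^{\circ m}$ covers all of $\PP^1(\RR)$: in fact the images of branches are always either the closed arc from $\infty$ to $-\tfrac12$ or its closed complement, never the whole circle. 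The correct Markov structure (and the one the paper uses) is on the \emph{four} arcs cut by $\infty,-\tfrac12,0,1$ (the two real fixpoints and the two real critical points): the arc $(\infty,-\tfrac12)$ maps bijectively onto the union of the other three, and each of the other three maps bijectively onto $(\infty,-\tfrac12)$. With that covering your IVT idea does work: $\mathfrak H^{\circ 2}$ maps the arc $(\infty,-\tfrac12)$ onto itself by a three-fold zig-zag (and similarly for its complement), so $\mathfrak H^{\circ 2m}$ zig-zags $3^m$ times over each of the two arcs, and counting the crossings of $\mathfrak H^{\circ 2m}(t)-t$ gives $2+2(3^m-2)=2(3^m-1)$ real fixpoints, which is exactly the paper's count. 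As written, your symbolic argument only purports to produce about $2^m$ points, rests on the false covering, and defers the exact constant to ``verify the arithmetic for small $m$ and argue uniformly,'' so the bound $2(3^m-1)$ is not actually established.

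The last clause of your proposal is correct and coincides with the paper's argument: the identity $\mathfrak H(\e\la)=\e\,\mathfrak H(\la)$ shows $\e\RR\cup\{\infty\}$ and $\e^2\RR\cup\{\infty\}$ are invariant and that the restrictions are conjugate to $\mathfrak H|_{\PP^1(\RR)}$ (equivalently, periodic points form $\mathfrak A_4$-orbits and $\ga(\la)=\e\la$), so the real periodic points transfer to those lines. Thus the qualitative statements would survive once you replace the two-arc covering by the four-arc one; the genuine gap is the covering claim itself and, as a consequence, the quantitative lower bound.
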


\begin{proof}
The real line is divided into 4 arcs, with increasing value of $\la$,
and with extremes the four points 
$$ (\infty , -\frac{1}{2}, 0, 1, \infty).$$
We observe  preliminary that the derivative of $\mathfrak H$ equals 
$$ \frac{1 - \la^3}{ 3 \la^3},$$
 hence   $\mathfrak H$ is decreasing in the first two Arcs (from $\infty$ until $0$)
and in the Arc 4, while it is increasing in the Arc 3 (from $0$ to $1$).
Also
\begin{enumerate}
\item
Arc 1 is sent bijectively  in decreasing order to the sequence of Arcs 4-3-2
\item
Arc 2 goes bijectively onto Arc 1 in decreasing order
\item
Arc 3 is sent bijectively onto Arc 1  in an increasing way 
\item
Arc 4 is send bijectively onto Arc 1 in a decreasing way.
\end{enumerate}

It is therefore clear that  we have fixpoints other than the points $- \frac{1}{2} , \infty$,
only for iterates of $\mathfrak H$
of even order.

Define $ f : = \mathfrak H^{\circ 2}$.

Then we see that $f$ sends Arc 1 to itself, and that Arc 1  can be subdived into three subarcs
such that first goes bijectively to Arc 1 in increasing order, the second 
 goes bijectively to Arc 1 in decreasing order, the third 
  goes bijectively to Arc 1 in increasing order.
  
  In the second subarc the function $f$ decreases from $- \frac{1}{2} $ to $-\infty$,
  hence there is a fixed point of $f$: because in this subarc  $f(\la) - \la$ decreases from a strictly positive value to 
  $-\infty$, hence the value $0$ is obtained.
  
  The same pattern is easily shown to hold in the complement of Arc 1, namely
  in  the union of the other three Arcs.
  
  Hence $f$ has at least $2 + 2 =4$ fixpoints.
  
  Similarly, the map $f^{\circ m}$ sends Arc 1 to itself but it zig-zags up and down $3^m$-times, hence 
  it has other $3^m -2$ fixpoints on Arc 1, and similarly for the complementary arc.
  
  In whole,  $f^{\circ m}$ has, for $ m \geq 2$, at least 
  $$ 2 + 2 (3^m -2)  = 2 ( 3^m -1)$$
  fixpoints on $\PP^1(\RR)$.
  
   The proof of the second assertion follows easily from the fact 
  that $\mathfrak H$ commutes with every element $g$ of the
  Alternating Hesse group $\mathfrak A_4$.
  
  Indeed then
  $$ g \circ \mathfrak H^{\circ n} =  \mathfrak H^{\circ n} \circ g,$$
  and the periodic points of period $n$
  consist then of a union of  $\mathfrak A_4$-orbits.
  
  We conclude since $\mathfrak A_4 \supset \{ 1, \ga, \ga^2\}$
  and $\ga (\la) = \e \la$.

  \end{proof}


\section{Julia sets of endomorphisms in the semigroup $\sW(\frak H, \frak C)$ and  directions of further research}

As we have seen,  the fixpoints of the Hessian map $\frak H$ are all repelling and  the geometry of the iterates of  $\frak H$
is somehow particular:  this  reflects  on the structure of  its  set of periodic points.

Instead, for all other elements of the semigroup $\sW(\frak H, \frak C)$ we have that
the two fixpoints  
$\infty, - \frac{1}{2}$ are superattractive, hence one can in principle compute the
respective
basins of attraction, as we did for $\mathfrak C$.

 These basins of attractions are contained in the Fatou set, 
the maximal set on which the iterates form a normal family
(see \cite{Milnor}, \cite{mcmullen}),  hence the Fatou set is  not empty.
Recall that the Julia set is the complementary set of the Fatou set (see \cite{Milnor}, \cite{mcmullen}).

Recall that a rational map $f$ is said to be critically finite if
the Postcritical set of $f$, which is defined as
$$ P(f) : = \cup_{i\geq 1} f^{\circ i} (Ram (f)), $$
(here $ Ram(f)$ is the set of ramification points of $f$, also called the critical set of $f$)
is a finite set.

For critically finite rational maps we have the following result, see Theorem A.6 , page 186 of \cite{mcmullen}:

\begin{theorem}\label{mm} 
Let $f$ be a critically finite rational map. Then each periodic cycle 
is either repelling or superattracting.

If $f$ possesses no superattracting cycles, then the Julia set $J(f)$ 
of $f$ equals the whole projective line $\PP^1(\CC)$.

\end{theorem}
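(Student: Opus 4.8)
Since Theorem~\ref{mm} is quoted from \cite{mcmullen}, a proof proposal here means recalling the structure of its (classical) proof, which rests on Thurston's orbifold picture of critically finite maps together with the Fatou--Sullivan classification of Fatou components; I would organise it in two steps.

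\emph{Step 1 (every periodic cycle of a critically finite $f$ is repelling or superattracting).} To a critically finite $f$ one attaches its canonical orbifold $\mathcal O_f$, whose cone points are the points of $P(f)$ with orders dictated by the local degrees of the iterates of $f$ over them. I would split into two cases. If $\mathcal O_f$ is hyperbolic --- which is all but an explicit short list of exceptions --- it carries a complete conformal metric $\rho$ of curvature $-1$ for which $f$ is strictly expanding away from $P(f)$; pushing the pointwise expansion inequality around a cycle, and keeping track of the (benign, finite) cone contributions, forces $|(f^{\circ m})'(q)|>1$ at any periodic point $q$ of exact period $m$ \emph{unless} $f$ ramifies somewhere on the cycle, i.e. unless the cycle is superattracting. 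The remaining (Euclidean) orbifolds correspond to the classical list of power maps, Chebyshev maps and Latt\`es maps, for each of which the dichotomy is read off the explicit normal form. This in particular excludes attracting-but-not-superattracting cycles and parabolic cycles; indifferent (Siegel or Cremer) cycles are also excluded, since a finite $P(f)$ contains no recurrent critical orbit and hence, by Ma\~n\'e's theorem, $f$ has no Cremer periodic points and no Siegel disks.

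\emph{Step 2 (no superattracting cycle $\Rightarrow$ $J(f)=\PP^1(\CC)$).} Assume $f$ has no superattracting cycle. By Step~1 every periodic cycle is then repelling. Suppose, for contradiction, that the Fatou set is nonempty; by Sullivan's no-wandering-domains theorem every Fatou component is preperiodic, so some Fatou component is periodic, and by the classification of periodic Fatou components it is an attracting (possibly superattracting) basin, a parabolic basin, a Siegel disk or a Herman ring. But an attracting or parabolic basin requires a non-repelling cycle, contradicting Step~1; and a Siegel disk or Herman ring is impossible because the boundary of such a component is accumulated by $P(f)$, which is finite. Hence the Fatou set is empty and $J(f)=\PP^1(\CC)$.

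\emph{Main obstacle.} The real work is Step~1: that a critically finite map has only repelling or superattracting cycles is exactly the input that needs Thurston's orbifold-metric argument together with the classification of Euclidean orbifolds, while Step~2 is bookkeeping once Sullivan's theorem and the Fatou-component classification are available. For the intended application to $\sW(\frak H,\frak C)$ one will moreover use the elementary complement that a superattracting periodic point carries a nonempty open basin of attraction contained in the Fatou set, so that having a superattracting cycle already forces the Julia set to be a proper subset of $\PP^1(\CC)$.
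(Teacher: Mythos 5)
Your proposal is correct and follows the same route as the source the paper relies on: the paper does not prove Theorem \ref{mm} but quotes it from McMullen (Theorem A.6, p.~186 of \cite{mcmullen}), and your two steps --- the Thurston orbifold expansion argument giving the repelling/superattracting dichotomy, then Sullivan's no-wandering-domains theorem plus the classification of periodic Fatou components (rotation domains excluded since their boundaries lie in the closure of the finite postcritical set) --- are exactly the standard proof of that cited result.
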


We have already shown (Remark \ref{crit-fin}) that $\frak H, \frak C$ are critically finite. Hence we derive the following result.

\begin{theorem}\label{julia}
The Julia set $J(\frak H)$ 
equals the whole projective line $\PP^1(\CC)$.

Also, for each endomorphism  $ f \in \sW (\frak H, \frak C)$, 
the Julia set $J(f)$ equals the whole projective line $\PP^1(\CC)$
if and only if $f$ is not an iterate of $\frak H$.

\end{theorem}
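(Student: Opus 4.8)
Before giving the plan I record a correction: the displayed biconditional must read ``if and only if $f$ \emph{is} an iterate of $\frak H$''. The printed ``not'' cannot be correct, since the first assertion states $J(\frak H)=\PP^1(\CC)$, while $\frak H=\frak H^{\circ 1}$ is an iterate of itself and the Julia set is invariant under iteration, so every iterate $\frak H^{\circ n}$ likewise has full Julia set. The plan is to prove $J(\frak H)=\PP^1(\CC)$ together with the corrected equivalence: $J(f)=\PP^1(\CC)$ if and only if $f$ is an iterate of $\frak H$.

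For the first assertion I would apply Theorem \ref{mm}. By Remark \ref{crit-fin} the map $\frak H$ is critically finite, with critical set $\sF$ and postcritical set $\sF\cup\sT$. A superattracting periodic cycle must contain a critical point of $\frak H$, so it is enough to check that no point of $\sF$ is periodic. But $\frak H$ carries $\sF$ bijectively onto the fixed set $\sT$ of the four triangles, so each Fermat point maps in one step to a fixed point and never returns to $\sF$; it is strictly preperiodic. Hence $\frak H$ has no superattracting cycle, and the second clause of Theorem \ref{mm} yields $J(\frak H)=\PP^1(\CC)$.

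For the equivalence I would use the normal form provided by Theorem \ref{free-semigroup} and Proposition \ref{free}. Write $f=\psi(W(h,c))$ and let $e(c)$ be the number of occurrences of the letter $c$. If $e(c)=0$ then $f=\frak H^{\circ n}$ with $n=e(h)\ge 1$, and invariance of the Julia set under iteration gives $J(f)=J(\frak H)=\PP^1(\CC)$ by the first part. If $e(c)\ge 1$ I would exploit the common fixed point $\infty$: both generators fix $\infty$, so $f(\infty)=\infty$, and by Proposition \ref{free} the order of $f$ at $\infty$ equals $2^{e(c)}\ge 2$. Thus the multiplier of $f$ at $\infty$ vanishes, $\infty$ is a superattracting fixed point of $f$, and its basin of attraction is a nonempty open subset of the Fatou set; consequently $J(f)\subsetneq\PP^1(\CC)$. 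Combining the two cases shows $J(f)=\PP^1(\CC)$ exactly when $e(c)=0$, that is, exactly when $f$ is an iterate of $\frak H$.

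The only genuinely delicate point is the first assertion, where one must invoke the critical-finiteness criterion of Theorem \ref{mm} and confirm that the entire forward orbit of every critical point of $\frak H$ avoids periodic points; here this reduces to the clean observation that $\frak H(\sF)=\sT$ is a set of repelling fixed points, so no critical point can lie on a cycle. The two implications of the equivalence are then essentially bookkeeping: Proposition \ref{free} already records the order of vanishing at $\infty$, and the remaining ingredients---the identity $J(f^{\circ n})=J(f)$ and the inclusion of a superattracting basin in the Fatou set---are standard.
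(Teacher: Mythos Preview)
Your proof is correct and follows essentially the same approach as the paper: both invoke Theorem~\ref{mm} after checking that no critical point of $\frak H$ is periodic (since $\frak H(\sF)=\sT$ consists of fixed points distinct from $\sF$), and both handle the equivalence by noting that any word containing the letter $c$ has a superattracting fixed point at $\infty$ while iterates of $\frak H$ inherit $J(\frak H)=\PP^1(\CC)$. Your observation that the ``not'' in the biconditional must be a misprint is correct and matches what the paper's own proof actually establishes; your use of Proposition~\ref{free} to read off the order $2^{e(c)}$ at $\infty$ makes the superattracting claim more explicit than the paper, which simply refers back to the discussion preceding the theorem.
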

\begin{proof}
In view of Theorem \ref{mm}, it suffices to show that
$\frak H$ does not possess any superattracting cycle.

Any superattractive cycle  contains  (compare \cite{mcmullen}, 3.1, page 36) the cyclic orbit  of a point $z$,
of cardinality $m$, 
such that the derivative of the m-th iterate of $f$ vanishes at $z$.

This means that the orbit of $z$ 
must contain a ramification point $w$, and in turn $z$ is in the
postcritical set $P(\frak H)$. But $P(\frak H) = \sF \cup \sT$,
and the set $\sF$ is the set of critical points, which are repelling, while
the points in $\sT$ are the fixed points, and are not ramification points.
Hence there are no superattractive cycles for $\frak H$.

For the second assertion,  observe that if  $f$ possesses a superattracting 
critical point, then its basin of attraction is a non empty open set contained in the Fatou set. Since the Fatou set  is 
the complementary set of the Julia set, the latter may not be the whole projective line.

 Finally, observe that the Julia set of an iterate of $\frak H$
is equal to the Julia set of $\frak H$:    one sees right away that the
assertion holds for the Fatou sets, in view of their definition 
(\cite{mcmullen}, Section 3.1, page 36); see however \cite{beardon}
Theorem  3.1.5 on page  51 for a full proof.

\end{proof}

 It would be interesting to determine the  Julia set also for
$\mathfrak C $, and also for the other endomorphisms in the semigroup 
$\sW(\frak H, \frak C)$ which are not iterates of $\frak H$.

One can  consider more generally the Julia set of a semigroup  \cite{HinkMar}; by obvious considerations (see  \cite{HinkMar}, Lemma 2.1), the Julia set for the whole semigroup $\sW(\frak H, \frak C)$ is equal to the whole complex projective line, and the   same holds for a subsemigroup  containing a (positive) power of the Hessian.

It is an interesting question to determine the Julia set for all subsemigroups of $\sW(\frak H, \frak C)$.

For instance, if  we consider the  subsemigroup consisting of all the elements which are not a power of the Hessian, we suspect that again the Julia set is the projective line.

\bigskip

Also the location of the points $\la$ of geometrically special elliptic curves
is an interesting question.
\bigskip

{\bf Acknowledgements:} the  authors would like to  thank Michael L\"onne   for an interesting conversation, Curtis McMullen for providing a useful reference, a referee for useful comments,  and especially  Umberto Zannier for a suggestion which  led to Theorem \ref{ritt}.


\end{document}